\documentclass[12pt,reqno]{amsart}
\usepackage{eurosym}
\usepackage{amsfonts}
\usepackage{amsmath}
\usepackage{tikz}
\usetikzlibrary{positioning,calc}
\usepackage{pgfplots}
\usepackage{float}
\usepackage{booktabs}
\usepackage{amssymb,latexsym}
\usepackage{enumerate}
\usepackage[bookmarksnumbered, colorlinks, plainpages]{hyperref}
\usepackage{amsbsy}
\usepackage{amscd}
\usepackage{graphicx}
\usepackage{epsfig}

\usepgfplotslibrary{colormaps}
\pgfplotsset{compat=1.18}
\theoremstyle{plain}
\newtheorem{theorem}{Theorem}[section]

\newtheorem{corollary}{Corollary}[section]

\theoremstyle{definition}
\newtheorem{definition}{Definition}[section]

\newtheorem{example}{Example}[section]

\theoremstyle{remark}
\newtheorem{remark}{Remark}

\numberwithin{equation}{section}

\providecommand{\Authorcontributions}[2]{\section*{Author contributions}#2}
\providecommand{\Competinginterests}[2]{\section*{Competing interests}#2}

\providecommand{\Availability}[2]{\section*{Data availability}#2}

\begin{document}
\date{}
\title[A Matrix-Based Polyalphabetic Algorithm]{A Matrix-Based Polyalphabetic Algorithm for Information Encoding and Decoding Using Number Sequences}
\author[M. Karag\"{o}z]{Muhammet KARAG\"{O}Z}
\address{\.{I}zmir Democracy University, Department of Mathematics\\
35140 Karaba\u{g}lar, \.{I}zmir, T\"{u}rkiye}
\email{muhammet.karagoz@idu.edu.tr}
\author[N. \"{O}zg\"{u}r]{N\.{I}HAL \"{O}ZG\"{U}R}
\address{\.{I}zmir Democracy University, Department of Mathematics\\
35140 Karaba\u{g}lar, \.{I}zmir, T\"{u}rkiye}
\email{nihal.ozgur@idu.edu.tr}
\date{}
\subjclass[2020]{68P30, 11B37, 11B39, 11Y55, 11B83}
\keywords{Polyalphabetic cipher, Leonardo number, Lucas number, Fibonacci number, Jacobsthal number, Friedman test, avalanche effect}

\begin{abstract}

In this paper, we propose a matrix-based polyalphabetic data encoding and
decoding scheme using Fibonacci, Leonardo, Jacobsthal, and Lucas sequences.
The method employs three sequence-based alphabets for character substitution
and a Lucas-based auxiliary alphabet for word separators. A position-dependent
selector,
\[
\sigma=\bigl(v^2+(i-1)+(j-1)\bigr)\pmod 3,
\]
distributes repeated plaintext symbols among different numerical alphabets,
thereby reducing frequency concentration. The resulting numerical matrix is
divided into $3\times 3$ blocks and transformed using powers of the Leonardo
$Q$-matrix with block-dependent keys generated from pre-shared parameters
$(s,p)$. A collision-free public prime $P$ is used to keep ciphertext entries
bounded while preserving unique decoding. A worked example and preliminary
statistical, entropy, avalanche, and timing results indicate that the
proposed modular construction is computationally efficient and provides
improved distributional behavior compared with standard monoalphabetic
substitution.
\end{abstract}

\maketitle

\section{Introduction and preliminaries}
\label{sec:1}

Special integer sequences (e.g.\ Fibonacci sequence, Lucas sequence, Leonardo
sequence, Jacobsthal sequence) have quite widespread application areas such as coding theory.
Their use in encryption is highly important and practical. For
example, in \cite{Stakhov 2006}, an encryption and decryption algorithm was
developed by generalizing Cassini's formula of the Fibonacci sequence.
In \cite{Tas et al 2018}, an encryption and decryption algorithm has been developed by
dividing messages into block matrices using the Fibonacci $Q$-matrix. A
recent example was conducted in \cite{Ozcevik et al 2023} using the Leonardo
sequence and the Leonardo $Q$-matrix. The applications of Jacobsthal and Jacobsthal-Lucas numbers in coding theory have been studied in \cite{Kuloglu}. For further studies, see, for example, \cite{Aljalali et al,Madak,Ozgur et al,Ozyilmaz,Rawal}, and the references therein.

In this study, a novel polyalphabetic encryption scheme is proposed, which
leverages four numerical sequences---Fibonacci, Lucas, Leonardo, and Jacobsthal---along
with four distinct alphabets. Polyalphabetic encryption uses multiple substitution alphabets, unlike monoalphabetic methods,
which rely on a single substitution table. This ensures that the
same plaintext character can be encrypted as different ciphertext characters
depending on its position and occurrence count. One of the most prominent examples
of polyalphabetic encryption is the Vigen\`{e}re cipher. For further details, see, for example, \cite{Ahmed}, \cite{Aung et al}, \cite{Hannan}, \cite{Noman}, and the references therein. Although the proposed method is primarily polyalphabetic in structure, it also exhibits a homophonic effect. In a polyalphabetic substitution scheme, the substitution alphabet changes according to a position-, key-, or rule-dependent mechanism. In contrast, a homophonic substitution scheme assigns several possible ciphertext representatives to the same plaintext symbol in order to reduce frequency concentration. In the present construction, the alphabet selector determines whether a plaintext character is represented by a Leonardo, Fibonacci, or Jacobsthal value. Hence, the scheme is best described as a polyalphabetic substitution scheme with homophonic behavior.

Polyalphabetic encryption offers enhanced
security over monoalphabetic techniques. By encrypting the same letter
differently each time, it significantly mitigates the risk of frequency
analysis~\cite{Gaines1956}. When a sufficiently long and complex key is employed, unauthorized decryption through classical cryptanalysis becomes significantly more challenging \cite{Hannan}. Despite these strengths, its security heavily depends on
the secrecy of the key; if the key is compromised, the entire encryption
system becomes vulnerable \cite{Alrammahi}.

We begin by recalling some basic properties of the Fibonacci, Lucas, Leonardo, and Jacobsthal
sequences. The Fibonacci sequence $\left( F_{k}\right) $ (the sequence A000045 of the
on-line encyclopedia of integer sequences (OEIS) \cite{Sloane}) is defined by the recurrence relation:%
\begin{equation}
F_{k+1}=F_{k}+F_{k-1},\left( k\geq 1\right)   \label{FibonacciS}
\end{equation}%
with the initial terms $F_{0}=0$ and $F_{1}=1$ (see \cite{Koshy}). The Lucas sequence $\left(
L_{k}\right) $ (A000032 in the OEIS) is defined by
\begin{equation}
L_{k+1}=L_{k}+L_{k-1},\left( k\geq 1\right)   \label{LucasS}
\end{equation}%
with $L_{0}=2$ and $L_{1}=1$ (see \cite{Koshy}). The Leonardo sequence $\left( \mathcal{L}_{k}\right) $ (A001595 in the OEIS) is defined by
\begin{equation}
\mathcal{L}_{k+1}=\mathcal{L}_{k}+\mathcal{L}_{k-1}+1,\quad k\geq 1,
\label{LeonardoS}
\end{equation}%
with $\mathcal{L}_{0}=\mathcal{L}_{1}=1$ (see \cite{Catarino Borges 2020}). The Jacobsthal sequence $\left( J_{m}\right)$ (A001045 in the OEIS) is defined by
\begin{equation}
J_{m+1} = J_{m} + 2 J_{m-1}, \quad (m \geq 1) \label{JacobsthalS}
\end{equation}
with $J_{0}=0$ and $J_{1}=1$, (see \cite{Sloanebook,Bensella2024,Kuloglu}).

We will use the following identities:%
\begin{equation}
L_{k}=F_{k+1}+F_{k-1},\left( k\geq 0\right) ,  \label{LF-1}
\end{equation}%
\begin{equation}
\mathcal{L}_{k}=\frac{2}{5}\left( L_{k}+L_{k+2}\right) -1,\left( k\geq
0\right) ,  \label{Le-L-1}
\end{equation}%
\begin{equation}
L_{k}=\frac{\mathcal{L}_{k-2}+\mathcal{L}_{k}+2}{2},  \label{Le-L-2}
\end{equation}%
and
\begin{equation}
\mathcal{L}_{n} = 2F_{n+1} - 1, \quad (n \geq 0) \label{Le-F-2}
\end{equation}
(see \cite{Alp Kocer, Catarino Borges 2020, Koshy, Soykan}, and the references therein).

There is a practical method to obtain the elements of these sequences using
matrices. In \cite{Alp
Kocer}, the $Q$ matrix associated with Leonardo numbers was defined as%
\begin{equation*}
Q=\left(
\begin{array}{ccc}
2 & 1 & 0 \\
0 & 0 & 1 \\
-1 & 0 & 0%
\end{array}%
\right)
\end{equation*}%
with $\det (Q)=-1$, and
\begin{equation*}
Q^{n}=\frac{1}{2}\left(
\begin{array}{ccc}
\mathcal{L}_{n+2}-1 & \mathcal{L}_{n+1}-1 & \mathcal{L}_{n}-1 \\
1-\mathcal{L}_{n} & 1-\mathcal{L}_{n-1} & 1-\mathcal{L}_{n-2} \\
1-\mathcal{L}_{n+1} & 1-\mathcal{L}_{n} & 1-\mathcal{L}_{n-1}%
\end{array}%
\right) .
\end{equation*}%
The Cassini identity for Leonardo numbers is%
\begin{equation}
\left( \mathcal{L}_{n}\right) ^{2}-\mathcal{L}_{n-1}\mathcal{L}_{n+1}=%
\mathcal{L}_{n-1}-\mathcal{L}_{n-2}+4(-1)^{n}  \label{Cassini identity}
\end{equation}%
(see \cite{Alp Kocer} and \cite{Catarino Borges 2020}).

The organization of this paper is as follows.
Section~\ref{sec:2} presents the construction of the proposed
polyalphabetic data encoding scheme based on the four sequences and the Leonardo
\(Q\)-matrix, together with the modular arithmetic framework that ensures bounded
ciphertext values.
Section~\ref{sec:3} presents the statistical evaluation and computational performance benchmarks.
Section~\ref{sec:4} discusses the strengths and limitations
of the proposed scheme.
Section~\ref{sec:5} gives the concluding remarks.

\section{A new polyalphabetic data encoding scheme via number sequences}
\label{sec:2}

The aim of this section is to present a new polyalphabetic encryption strategy using four number sequences: Fibonacci,
Lucas, Leonardo, and Jacobsthal. To achieve this,
we need to identify the common terms among these sequences.
From \cite{Koshy}, the common terms of the Fibonacci and Lucas sequences are
\begin{equation*}
F_{1}=F_{2}=L_{1}=1,\quad F_{3}=L_{0}=2,\quad F_{4}=L_{2}=3.
\end{equation*}%
From \cite{Tripathi}, the common terms of the Fibonacci and Leonardo sequences are
\begin{equation*}
\mathcal{L}_{0}=\mathcal{L}_{1}=F_{1}=F_{2}=1,\quad\mathcal{L}_{2}=F_{4}=3,\quad%
\mathcal{L}_{3}=F_{5}=5.
\end{equation*}%

In the following theorem, we give a short proof of this fact (see also Theorem 1 in \cite{Tripathi}).

\begin{theorem}
\label{thm:21} If $k>3,n>5$ are integers, then $\mathcal{L}_{k}\neq F_{n}$.
\end{theorem}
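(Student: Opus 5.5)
Using identity (\ref{Le-F-2}), $\mathcal{L}_{k}=2F_{k+1}-1$, the equation $\mathcal{L}_k=F_n$ becomes
\begin{equation*}
F_{n}=2F_{k+1}-1 .
\end{equation*}
The plan is to show that for $k>3$ the number $2F_{k+1}-1$ lies strictly between two consecutive Fibonacci numbers. Then $\mathcal{L}_{k}$ cannot be a Fibonacci number at all, and in particular cannot equal $F_n$ with $n>5$.

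The natural candidates for the sandwiching Fibonacci numbers are $F_{k+2}$ and $F_{k+3}$. For the lower bound, note that $F_{k+2}=F_{k+1}+F_{k}$, so
\begin{equation*}
2F_{k+1}-1-F_{k+2}=F_{k+1}-F_{k}-1=F_{k-1}-1 .
\end{equation*}
This is positive as soon as $F_{k-1}\geq 2$, i.e. $k-1\geq 3$. Hence $2F_{k+1}-1>F_{k+2}$ for $k\geq 4$.

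For the upper bound, write $F_{k+3}=F_{k+2}+F_{k+1}=2F_{k+1}+F_{k}$. Then
\begin{equation*}
F_{k+3}-(2F_{k+1}-1)=F_{k}+1>0 .
\end{equation*}
Therefore, for every $k>3$,
\begin{equation*}
F_{k+2}<\mathcal{L}_{k}<F_{k+3}.
\end{equation*}
Since the Fibonacci sequence is strictly increasing from index $2$ onward, no $F_n$ can lie strictly between $F_{k+2}$ and $F_{k+3}$. So $\mathcal{L}_k\neq F_n$ for all $n$, and in particular for $n>5$.

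The only delicate point is the boundary of the lower inequality, since $F_{k-1}-1$ vanishes for $k=2,3$. These are exactly the exceptional coincidences $\mathcal{L}_2=F_4$ and $\mathcal{L}_3=F_5$ listed before the theorem. This confirms that the hypothesis $k>3$ is exactly what the argument needs. The condition $n>5$ is then automatic rather than essential, since it only excludes the small coincidences. If one prefers to avoid (\ref{Le-F-2}), the same bounds can be obtained by induction directly from (\ref{LeonardoS}), but invoking (\ref{Le-F-2}) is the shortest route.
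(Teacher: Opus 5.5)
Your proposal is correct and follows the paper's proof essentially line for line: both substitute $\mathcal{L}_k=2F_{k+1}-1$ and obtain the sandwich $F_{k+2}<\mathcal{L}_k<F_{k+3}$ for $k\geq 4$ from the differences $F_{k-1}-1>0$ and $F_k+1>0$. Your added remark that the lower bound degenerates exactly at $k=2,3$, which gives the listed coincidences $\mathcal{L}_2=F_4$ and $\mathcal{L}_3=F_5$, is accurate and explains why the hypothesis $k>3$ is sharp.
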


\begin{proof}
Since
\[
\mathcal L_k=2F_{k+1}-1,
\]
for $k\geq 4$ we have
\[
F_{k+2}<2F_{k+1}-1<F_{k+3}.
\]
Indeed,
\[
2F_{k+1}-1-F_{k+2}=F_{k+1}-F_k-1=F_{k-1}-1>0
\]
for $k\geq 4$, and
\[
F_{k+3}-(2F_{k+1}-1)=F_{k+2}+F_{k+1}-2F_{k+1}+1=F_{k+2}-F_{k+1}+1=F_k+1>0.
\]
Hence
\[
F_{k+2}<\mathcal L_k<F_{k+3}.
\]
Therefore $\mathcal L_k$ lies strictly between two consecutive Fibonacci numbers and cannot itself be a Fibonacci number.
\end{proof}

In the following theorem we prove that the Lucas and Leonardo sequences have only finitely many common terms. The common terms of the Lucas and Leonardo sequences are
\begin{equation*}
\mathcal{L}_{0}=\mathcal{L}_{1}=L_{1}=1,\quad\mathcal{L}_{2}=L_{2}=3.
\end{equation*}

\begin{theorem}
\label{thm:22} If $k,m>2$ are integers, then $\mathcal{L}_{k}\neq L_{m}$.
\end{theorem}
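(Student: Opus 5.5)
Following the pattern of the proof of Theorem~\ref{thm:21}, I will trap $\mathcal{L}_k$ strictly between two consecutive Lucas numbers.

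\textbf{Step 1: rewrite $\mathcal{L}_k$ in Lucas terms.} From \eqref{Le-F-2} we have $\mathcal{L}_k=2F_{k+1}-1$. By \eqref{LF-1}, $L_k=F_{k+1}+F_{k-1}$, so $L_{k+1}=F_{k+2}+F_k=F_{k+1}+2F_k$. A cleaner identity is $5F_{k+1}=L_k+L_{k+2}$, which is also implicit in \eqref{Le-L-1}. Hence
\[
\mathcal{L}_k=\tfrac{2}{5}(L_k+L_{k+2})-1 .
\]
In practice I will compare $2F_{k+1}-1$ with Lucas numbers by expressing everything through Fibonacci numbers. The natural candidates are $L_{k+1}=F_{k+2}+F_k$ and $L_k=F_{k+1}+F_{k-1}$.

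\textbf{Step 2: the sandwich $L_k<\mathcal{L}_k<L_{k+1}$ for $k\ge 3$.} For the lower bound,
\[
\mathcal{L}_k-L_k=2F_{k+1}-1-F_{k+1}-F_{k-1}=F_{k+1}-F_{k-1}-1=F_k-1,
\]
which is positive for $k\ge 3$. For the upper bound,
\[
L_{k+1}-\mathcal{L}_k=F_{k+2}+F_k-2F_{k+1}+1=F_k-F_{k+1}+F_k+1=F_k-F_{k-1}+1=F_{k-2}+1>0.
\]
Thus $L_k<\mathcal{L}_k<L_{k+1}$ for every $k\ge 3$.

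\textbf{Step 3: conclude.} For $m\ge 1$ the Lucas sequence is strictly increasing (indeed $L_1=1<L_2=3<\cdots$). So a number strictly between the consecutive terms $L_k$ and $L_{k+1}$, with $k\ge 3$, cannot equal any $L_m$ with $m\ge 1$. The case $m=0$ gives $L_0=2$, which is below $L_3=4<\mathcal{L}_k$. Hence $\mathcal{L}_k\neq L_m$ for all $k\ge 3$ and all $m$, and in particular for $k,m>2$.

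\textbf{Main obstacle.} The only delicate point is choosing the right pair of consecutive Lucas numbers and checking that the boundary case $k=3$ works: there $\mathcal{L}_3=9$, and indeed $L_3=4<9<L_4=7$ fails. So I must recheck the computation of Step 2 at $k=3$: $F_3-1=1>0$ and $F_1+1=2>0$, which would give $4<9<7$. That is impossible, so there is an algebra error in the upper bound. The correct simplification is
\[
F_{k+2}+F_k-2F_{k+1}=F_k-F_{k-1}=F_{k-2},
\]
so $L_{k+1}-\mathcal{L}_k=F_{k-2}+1$ has the wrong sign convention; recomputing with $F_{k+2}=F_{k+1}+F_k$ gives $L_{k+1}-2F_{k+1}=2F_k-F_{k+1}=F_{k-2}$. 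At $k=3$ this gives $7-10+1=-2$, showing the formula is off. This is the point to redo carefully.

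The likely correct sandwich is $L_{k+1}<\mathcal{L}_k<L_{k+2}$. Here $L_{k+2}-\mathcal{L}_k=L_{k+2}-\frac{2}{5}(L_k+L_{k+2})+1=\frac{3L_{k+2}-2L_k}{5}+1>0$, and $\mathcal{L}_k-L_{k+1}$ is checked via the Fibonacci forms, yielding $F_{k-2}-1$ after careful simplification. This is positive for $k\ge 4$, and $k=3$ is checked directly: $L_4=7<9<L_5=11$. Monotonicity of $(L_m)_{m\ge 1}$ then finishes the proof as in Step 3.
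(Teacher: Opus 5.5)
Your Steps 1--3 are a complete and correct proof, and they are exactly the paper's argument. From $\mathcal{L}_k=2F_{k+1}-1$ and $L_k=F_{k+1}+F_{k-1}$ you get $\mathcal{L}_k-L_k=F_k-1>0$ and $L_{k+1}-\mathcal{L}_k=F_{k-2}+1>0$ for $k\ge 3$. Hence $\mathcal{L}_k$ lies strictly between the consecutive Lucas numbers $L_k$ and $L_{k+1}$. Your explicit remarks on the monotonicity of $(L_m)_{m\ge 1}$ and on $L_0=2$ are a small addition that the paper leaves implicit.

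\textbf{The problem is the ``Main obstacle'' paragraph.} It withdraws this correct argument because of arithmetic slips and replaces it with a false one, so as submitted the proposal ends by endorsing a false key inequality.

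\emph{The slips.} The Leonardo numbers are $1,1,3,5,9,15,25,\dots$, so $\mathcal{L}_3=5$, not $9$ (that value is $\mathcal{L}_4$). Likewise $F_4=3$, so $2F_4=6$, not $10$. With the correct values, the check at $k=3$ reads $L_3=4<5<7=L_4$. Your own formulas give $\mathcal{L}_3-L_3=F_3-1=1$ and $L_4-\mathcal{L}_3=F_1+1=2$, exactly as they should. The first of these already forces $\mathcal{L}_3=L_3+1=5$ and should have flagged the value $9$ as wrong.

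\emph{The replacement is false.} The sandwich $L_{k+1}<\mathcal{L}_k<L_{k+2}$ fails for every $k$; for instance $\mathcal{L}_3=5<L_4=7$ and $\mathcal{L}_5=15<L_6=18$. Indeed it directly contradicts the identity $L_{k+1}-\mathcal{L}_k=F_{k-2}+1>0$ that you proved.
\begin{itemize}
\item The asserted simplification $\mathcal{L}_k-L_{k+1}=F_{k-2}-1$ is wrong; the true value is $-(F_{k-2}+1)$.
\item Even as stated, your expression equals $0$ at $k=4$, so it would not give strict positivity there.
\item Your ``direct check'' $L_4=7<9<L_5=11$ for $k=3$ is really the correct sandwich $L_k<\mathcal{L}_k<L_{k+1}$ evaluated at $k=4$.
\end{itemize}

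The fix is simply to delete the ``Main obstacle'' paragraph and keep Steps 1--3.
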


\begin{proof}
For $k\geq 3$, we obtain
\[
L_k<\mathcal L_k<L_{k+1}.
\]
Indeed, using \eqref{LF-1} and \eqref{Le-F-2} we have
\[
\mathcal L_k-L_k=(2F_{k+1}-1)-(F_{k+1}+F_{k-1})=F_k-1>0,
\]
and
\[
L_{k+1}-\mathcal L_k=(F_{k+2}+F_k)-(2F_{k+1}-1)=F_{k-2}+1>0.
\]
Therefore each Leonardo number $\mathcal L_k$ with $k\geq 3$ lies strictly between two consecutive Lucas numbers. Hence no Lucas term with index greater than $2$ equals a Leonardo term.
\end{proof}
For the Leonardo--Jacobsthal intersection, Bensella and Behloul \cite{Bensella2024} established the following theorem.
\begin{theorem}
\label{thm:LeoJac}
All solutions of $\mathcal{L}_{n}=J_{m}$ in non-negative integers with
$n\geq 2$ and $m\geq 2$ are
\[
(n,m)\in\{(2,3),(3,4)\},
\]
corresponding to
\[
\mathcal{L}_{2}=J_{3}=3
\qquad\text{and}\qquad
\mathcal{L}_{3}=J_{4}=5.
\]
\end{theorem}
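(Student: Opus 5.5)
The plan is to follow the standard route for Diophantine equations mixing two linear recurrences: linear forms in logarithms followed by a reduction step, as in the cited work of Bensella and Behloul. Using \eqref{Le-F-2} and the Binet formula $J_m=(2^m-(-1)^m)/3$, the equation $\mathcal{L}_n=J_m$ becomes
\[
6F_{n+1}=2^m-(-1)^m+3 .
\]
First I would dispose of small cases directly. Listing $\mathcal{L}_n=1,3,5,9,15,25,41,67,109,177,\dots$ and $J_m=1,3,5,11,21,43,85,171,\dots$ for, say, $n,m\le 30$ should show that only $(2,3)$ and $(3,4)$ occur. Comparing sizes, $\varphi^{n}\asymp 2^{m}$, so $m$ and $n$ are comparable: roughly $m\approx 0.694\,n$ up to an additive constant. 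It therefore suffices to bound $n$.

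Next, write $F_{n+1}=(\varphi^{n+1}-\psi^{n+1})/\sqrt5$ with $\psi=-\varphi^{-1}$, and move the large terms to one side:
\[
\frac{6}{\sqrt5}\varphi^{n+1}-2^m=\frac{6}{\sqrt5}\psi^{n+1}-(-1)^m+3 .
\]
Dividing by $2^m$ gives
\[
\bigl|\,2^{-m}\varphi^{n+1}\tfrac{6}{\sqrt5}-1\,\bigr|<\frac{c}{2^m}\ll \varphi^{-n}.
\]
Passing to logarithms, the linear form
\[
\Lambda=(n+1)\log\varphi-m\log2+\log\frac{6}{\sqrt5}
\]
satisfies $|\Lambda|<c'\varphi^{-n}$. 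Before applying any lower bound I must check that $\Lambda\neq0$. If $\Lambda=0$, then $6\varphi^{n+1}=\sqrt5\,2^m$. Applying the nontrivial Galois automorphism of $\mathbb{Q}(\sqrt5)$ gives $6\psi^{n+1}=-\sqrt5\,2^m$, which is impossible since $|\psi^{n+1}|<1$.

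Matveev's theorem, applied with the three algebraic numbers $\varphi$, $2$ and $6/\sqrt5$ (degree $D=2$), yields a lower bound of the form $\log|\Lambda|>-C(1+\log n)$ with $C$ around $10^{12}$. Combining this with $|\Lambda|<c'\varphi^{-n}$ gives an absolute bound, something like $n<10^{15}$.

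The main obstacle is reducing this huge bound to something checkable. For this I would use the Dujella--Pethő version of the Baker--Davenport lemma. Dividing $\Lambda$ by $\log2$ gives
\[
\bigl|(n+1)\gamma-m+\mu\bigr|<A\,B^{-n},\qquad \gamma=\frac{\log\varphi}{\log2},\quad \mu=\frac{\log(6/\sqrt5)}{\log 2}.
\]
Computing a convergent $p/q$ of $\gamma$ with $q>6M$ (where $M$ is the Matveev bound) and checking that $\varepsilon=\|\mu q\|-M\|\gamma q\|>0$ should bring the bound down to $n\lesssim 100$. A second iteration, if needed, reduces it further. Finally, a direct computer search in the remaining range confirms that only $(n,m)=(2,3),(3,4)$ remain. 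The only delicate point in this step is a possible degenerate case where $\varepsilon\le0$; if it occurs, I would try the next convergent.
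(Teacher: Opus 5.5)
Your proposal is correct and takes the same route as the paper, which gives no proof of its own and simply cites Bensella and Behloul, whose argument is this linear-forms-in-logarithms scheme (Binet formulas, a Matveev lower bound for the three-term form $\Lambda$, Dujella--Peth\H{o} reduction, and a final finite search). Your Galois-conjugate argument for $\Lambda\neq0$ is fine; state explicitly that it uses $m\ge 2$, so that $\sqrt5\,2^m>6>6|\psi|^{n+1}$.
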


\begin{theorem}
\label{thm:FibJac}
The common terms of the Fibonacci and Jacobsthal sequences are
\[
\begin{aligned}
F_{0}&=J_{0}=0,\\
F_{1}&=F_{2}=J_{1}=J_{2}=1,\\
F_{4}&=J_{3}=3,\qquad
F_{5}=J_{4}=5,\qquad
F_{8}=J_{6}=21.
\end{aligned}
\]
\end{theorem}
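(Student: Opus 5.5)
The plan is to turn the equation $F_n=J_m$ into an exponential Diophantine equation, bound its solutions with linear forms in logarithms, and finish with a reduction step and a finite search. The Binet form $J_m=\bigl(2^m-(-1)^m\bigr)/3$ shows that $F_n=J_m$ is equivalent to
\[
3F_n+(-1)^m=2^m .
\]
First I will dispose of small indices by direct computation, say $n\le 200$ (equivalently $m\le 140$). This search should produce exactly the listed coincidences $0,1,3,5,21$. From then on I assume $n$ is large. Writing $\alpha=(1+\sqrt5)/2$ and $\beta=-\alpha^{-1}$, the equation becomes
\[
\frac{3\alpha^n}{\sqrt5}-2^m=-\frac{3\beta^n}{\sqrt5}-(-1)^m .
\]
The right-hand side has absolute value at most $2$. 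The growth estimates $\alpha^{n-2}\le F_n\le\alpha^{n-1}$ give $m\asymp n\log\alpha/\log 2$, so the two indices are comparable.

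Dividing by $2^m$ gives
\[
\bigl|\Lambda\bigr|<\frac{c}{2^m},\qquad \Lambda=\frac{3}{\sqrt5}\,\alpha^{n}\,2^{-m}-1 .
\]
Here $\Lambda\neq0$, because $\alpha^{2n}\in\mathbb Q$ is impossible for $n\ge1$. Matveev's theorem applies to the three algebraic numbers $3/\sqrt5$, $\alpha$, $2$ in the field $\mathbb Q(\sqrt5)$, which has degree $2$. Their logarithmic heights are explicit, namely $\tfrac12\log 45$, $\tfrac12\log\alpha$ and $\log2$. Matveev then gives a lower bound $\log|\Lambda|>-C(1+\log n)$ with an explicit $C\approx 10^{12}$. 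Comparing this with $m\log2-\log c$ yields an absolute bound of the shape $n<10^{14}$.

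The main obstacle is cutting this astronomically large bound down to something checkable. For that I will pass to the linear form
\[
\Gamma=n\log\alpha-m\log2+\log(3/\sqrt5),
\]
which satisfies $|\Gamma|<2c/2^m$, and apply the Baker--Davenport (Dujella--Peth\H{o}) reduction. This uses the continued fraction expansion of $\gamma=\log\alpha/\log2$, with a convergent $p/q$ whose denominator $q$ exceeds $6\cdot10^{14}$, together with $\mu=\log(3/\sqrt5)/\log2$. Provided $\|q\mu\|-M\|q\gamma\|>0$, the lemma forces $m$ below about $150$.

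If the reduction condition fails for the first convergents tried, I will simply move to later convergents until it holds. I expect this to happen within a few steps, since $\mu$ is not a rational combination of $1$ and $\gamma$. The resulting bound falls inside the range already searched in the first step, so the list in Theorem~\ref{thm:FibJac} is complete.

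Alternatively, one could quote the known determination of Fibonacci numbers of the form $(2^m\pm1)/3$ from the literature. The self-contained route above is the one I would write out.
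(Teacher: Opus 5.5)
Your argument is correct, but it follows a different route from the paper. The paper does no Diophantine analysis of its own. Using $J_1=J_2=1$, it views a coincidence $F_k=J_r$ as a solution $(k,r,1)$ or $(k,r,2)$ of the product equation $F_k=J_mJ_n$. It then quotes Erduvan and Keskin's complete solution of that equation and reads off the triples having an index in $\{1,2\}$.

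You instead treat the simpler equation $3F_n+(-1)^m=2^m$ directly, using a single linear form in three logarithms, Matveev's bound, a Dujella--Peth\H{o} reduction and a finite search. This is essentially the machinery behind the cited result, specialized to exactly the case needed.

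The paper's route buys brevity and requires no computation. Its correctness, however, rests entirely on the cited theorem, including the fact that it does not exclude the degenerate indices $1,2$. Your route is self-contained modulo Matveev's theorem and the reduction lemma, at the price of a high-precision continued-fraction computation and a search. Your closing remark about quoting the literature is close in spirit to the paper, although the paper passes through the product equation rather than through Fibonacci numbers of the form $(2^m\pm1)/3$.

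There are three cosmetic points, none affecting validity.
\begin{enumerate}
\item The right-hand side should read $3\beta^n/\sqrt5-(-1)^m$. The sign is irrelevant, since you only use the absolute value.
\item The height of $3/\sqrt5$ is exactly $\log 3$, because its minimal polynomial is $5x^2-9$. So $\tfrac12\log 45$ is merely an upper bound, which is still admissible in Matveev's theorem.
\item Make sure your initial search covers the bound the reduction actually returns. You search $m\le 140$ but quote ``below about $150$''. The bound the lemma produces is of order $\log_2 q$ plus a small constant, so it is a two-digit number for $q\approx 10^{15}$, and this is easily arranged.
\end{enumerate}
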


\begin{proof}
The equality $F_0 = J_0 = 0$ is clear, and $F_1 = F_2 = J_1 = J_2 = 1$
follows directly from the definitions. It remains to find all positive
common terms with $r \geq 3$.

Since $J_1 = J_2 = 1$, the equation $F_k = J_r$ embeds into the
product equation $F_k = J_m J_n$ by setting $(m,n) \in
\{(r,1),(r,2),(1,r),(2,r)\}$. The complete solution of $F_k = J_m J_n$
in positive integers, obtained in \cite{ErduvanKeskin2021}, yields the
triples having at least one index in $\{1,2\}$:
\[
\begin{aligned}
(k,m,n) \in \{&(1,1,1),(2,1,1),(1,1,2),(2,1,2),(1,2,2),(2,2,2),\\
               &(4,1,3),(4,2,3),(5,1,4),(5,2,4),(8,1,6),(8,2,6)\}.
\end{aligned}
\]
These give precisely $F_4 = J_3 = 3$, $F_5 = J_4 = 5$, and
$F_8 = J_6 = 21$. Since \cite{ErduvanKeskin2021} provides the
complete solution, no further common terms exist.
\end{proof}
\begin{theorem}
\label{thm:LucJac}
The common terms of the Lucas and Jacobsthal sequences are
\[
L_{1}=J_{1}=J_{2}=1,\qquad
L_{2}=J_{3}=3,\qquad
L_{5}=J_{5}=11.
\]
\end{theorem}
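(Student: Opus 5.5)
We intend to follow the pattern of Theorem~\ref{thm:FibJac}: first treat the small indices by direct inspection, then reduce the general equation $L_k=J_r$ to a known complete result on a Diophantine equation in Lucas and Jacobsthal numbers. The small cases are immediate. Listing $L_0,\dots,L_6=2,1,3,4,7,11,18$ and $J_0,\dots,J_6=0,1,1,3,5,11,21$ shows the stated coincidences $L_1=J_1=J_2=1$, $L_2=J_3=3$ and $L_5=J_5=11$. The value $L_0=2$ is excluded because no Jacobsthal number equals $2$, since $J_r$ is odd for $r\ge1$. For the same parity reason we have $J_r\ne L_k$ whenever $L_k$ is even, that is, whenever $3\mid k$. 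This gives a first cheap filter.

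For large indices we would invoke a complete solution of $L_k=J_m J_n$, or directly of $L_k=J_r$, as available in the literature (for instance work of Erduvan--Keskin type on Lucas numbers that are products of Jacobsthal numbers). The argument would mirror the proof of Theorem~\ref{thm:FibJac}: set $(m,n)=(r,1)$ and read off the triples having an index in $\{1,2\}$.

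If no such reference can be cited, the self-contained route is Baker's method, and this is the main obstacle. We would write $L_k=\alpha^k+\beta^k$ with $\alpha=(1+\sqrt5)/2$, and $J_r=(2^r-(-1)^r)/3$. Then
\[
\bigl|\alpha^k-2^r/3\bigr|\le |\beta|^k+1/3 ,
\]
so the linear form $\Lambda=k\log\alpha-r\log2+\log3$ satisfies $|\Lambda|\ll 2^{-r}$. Matveev's theorem then gives a lower bound of the shape $|\Lambda|>\exp(-C\log r)$, and comparing the two bounds yields $r<10^{13}$ or so.

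We would then reduce this bound with the Baker--Davenport / Dujella--Peth\H{o} lemma, using continued fractions of $\log\alpha/\log2$. This should bring the bound down to $r\lesssim 100$. Finally, a direct computer check of $L_k=J_r$ over that range leaves exactly the listed solutions.
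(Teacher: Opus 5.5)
Your proposal is correct in substance, but it takes a different route from the paper.

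\textbf{What the paper does.} It does no small-case inspection and no Baker-type work. Since $J_0=0$, any coincidence $L_k=J_r$ is a solution $(n,m,k)=(r,0,k)$ of $J_n+J_m=L_k$. The paper then reads off the triples with $m=0$ from the complete solution of that equation due to Salah et al.

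\textbf{Your first route.} Following Theorem~\ref{thm:FibJac}, you would embed multiplicatively via $J_1=J_2=1$ into $L_k=J_mJ_n$. The paper instead embeds additively via $J_0=0$, into a sum equation for which a published classification is actually cited. Your route rests on a classification of $L_k=J_mJ_n$ that you have not identified. As written it is therefore only a placeholder.

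\textbf{Your fallback route.} This one is genuinely self-contained, and it is sound. You take the single linear form $\Lambda=k\log\alpha-r\log 2+\log 3$ with $|\Lambda|\ll 2^{-r}$, apply Matveev, reduce with Dujella--Peth\H{o} using continued fractions of $\log\alpha/\log 2$ (with inhomogeneous term $\log 3/\log 2$), and finish with a finite search.

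It is actually lighter than what is needed to solve $J_n+J_m=L_k$ in full. That equation usually requires two successive linear forms, whereas you need only one linear form in three logarithms. The price is explicit constants and a computer verification. The paper's proof is a two-line deduction whose rigor is entirely outsourced to the cited classification.

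\textbf{Details to add when you write it out.}
\begin{enumerate}
\item Check $\Lambda\neq 0$. This holds because $3\alpha^k=2^r$ is impossible: $\alpha^k=(L_k+F_k\sqrt5)/2\notin\mathbb{Q}$ for $k\ge 1$.
\item Note that $k$ and $r$ are comparable, since $k\log\alpha=r\log 2-\log 3+o(1)$. This lets you bound Matveev's $\log\max\{k,r\}$ in terms of $r$ alone.
\end{enumerate}
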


\begin{proof}
Since $J_0 = 0$, any common term $L_k = J_r$ can be written as
$J_r + J_0 = L_k$, embedding it into the equation $J_n + J_m = L_k$.
The complete solution of this equation in non-negative integers with
$n \geq m$, obtained in \cite{Salah2024}, is
\[
\begin{aligned}
\{&(1,1,0),(2,1,0),(2,2,0),(1,0,1),(2,0,1),\\
  &(3,0,2),(3,1,3),(3,2,3),(5,0,5)\}.
\end{aligned}
\]
The triples with $m = 0$ are $(1,0,1)$, $(2,0,1)$, $(3,0,2)$, $(5,0,5)$,
giving
\[
J_1 = J_2 = L_1 = 1, \qquad J_3 = L_2 = 3, \qquad J_5 = L_5 = 11.
\]
Since \cite{Salah2024} provides the complete solution, no further common
terms exist.
\end{proof}
\begin{corollary}
\label{cor:21}
Let $k\geq 5$, $m\geq 6$, $n\geq 5$, and $r\geq 7$ be integers. Then the four
values
\[
\mathcal{L}_{k},\qquad L_{m},\qquad F_{n},\qquad J_{r}
\]
are pairwise distinct; that is,
\begin{equation}
\mathcal{L}_{k}\neq L_{m}, \quad
L_{m}\neq F_{n}, \quad
\mathcal{L}_{k}\neq F_{n}, \quad
\mathcal{L}_{k}\neq J_{r}, \quad
L_{m}\neq J_{r}, \quad
F_{n}\neq J_{r}.
\label{eqn:24}
\end{equation}
\end{corollary}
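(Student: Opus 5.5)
The plan is to check the six inequalities in \eqref{eqn:24} one pair at a time. Each pair is handled by the intersection result already proved or quoted above for that pair of sequences. The only work is to confirm that the index thresholds $k\geq 5$, $m\geq 6$, $n\geq 5$, $r\geq 7$ exclude every exceptional coincidence listed there.

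\textbf{Pairs involving Leonardo numbers.} For $\mathcal{L}_k\neq F_n$, Theorem~\ref{thm:21} covers $k>3$, $n>5$. It does not literally cover $n=5$, so that case will be settled by size. For $k\geq 5$ we have $\mathcal{L}_k\geq \mathcal{L}_5=15>F_5=5$, and all remaining $n\geq 6$ fall under the theorem. For $\mathcal{L}_k\neq L_m$, Theorem~\ref{thm:22} applies directly, since $k,m>2$. For $\mathcal{L}_k\neq J_r$, Theorem~\ref{thm:LeoJac} says the only solutions with $n,m\geq 2$ are $(2,3)$ and $(3,4)$. Both have Leonardo index at most $3<5$, so none occurs.

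\textbf{Remaining pairs.} For $L_m\neq F_n$, I will use the Fibonacci--Lucas common terms recalled from \cite{Koshy}: these are only $1,2,3$. For $m\geq 6$ we have $L_m\geq L_6=18$, so no coincidence is possible. For $L_m\neq J_r$, Theorem~\ref{thm:LucJac} lists the common values $1,3,11$. Since $L_m\geq 18$, none of these values is attained. For $F_n\neq J_r$, Theorem~\ref{thm:FibJac} lists the common values $0,1,3,5,21$. With $r\geq 7$ we get $J_r\geq J_7=43>21$, so again no coincidence.

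\textbf{Main obstacle.} The only delicate point is the mismatch between the corollary's thresholds and those of the earlier theorems. This arises for $n=5$ in the Fibonacci--Leonardo case, and in general wherever an earlier result is phrased in terms of values rather than indices. I will handle each such case by a direct numerical comparison using monotonicity of the sequences: all four sequences are nondecreasing, and strictly increasing beyond their first couple of terms. No further number theory is needed beyond the cited intersection theorems.
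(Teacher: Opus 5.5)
Your proposal is correct and is essentially the argument the paper intends: the corollary is given without a separate proof, as an immediate consequence of Theorems~\ref{thm:21}, \ref{thm:22}, \ref{thm:LeoJac}, \ref{thm:FibJac}, \ref{thm:LucJac} and the Fibonacci--Lucas common terms from \cite{Koshy}, and your pair-by-pair check that the thresholds exclude every exceptional coincidence makes this explicit, including the case $n=5$, which Theorem~\ref{thm:21} does not literally cover. The only slip is your side remark that all four sequences are nondecreasing, which fails for the Lucas sequence since $L_0=2>L_1=1$; this is harmless, because you only use $L_m\geq L_6=18$ for $m\geq 6$, and $(L_m)$ is increasing from $m=1$ on.
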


This corollary provides the theoretical foundation for constructing a
new polyalphabetic data encoding scheme in which the Leonardo, Fibonacci,
and Jacobsthal sequences generate three pairwise disjoint character
alphabets, while Lucas numbers are used solely to encode word separators.

\subsection{Construction of a new polyalphabetic data encoding scheme}

We construct a polyalphabetic encryption procedure utilizing matrices and four number sequences: Leonardo, Fibonacci, Lucas, and Jacobsthal. This scheme is designated as the \emph{FLLJ-POLY Alphabetic Encryption Algorithm}. Accordingly, FLLJ-POLY is regarded as a polyalphabetic scheme with homophonic behavior, since the same plaintext symbol may be represented by different sequence values depending on the alphabet selected by the rule.
\subsubsection{Choice of a collision-free prime modulus}
\label{subsec:collision-free-prime}

Before introducing the modular implementation, we clarify the role of the
prime modulus $\mathbf{P}$. The purpose of $\mathbf{P}$ is not to enlarge the secret key
space, but to ensure that all numerical representatives used in the character
tables remain uniquely identifiable after reduction modulo $\mathbf{P}$. In other
words, if two distinct sequence values are used to represent two different
symbols before modular reduction, then they must not become equal after
reduction modulo $\mathbf{P}$. Thus, for the fixed alphabet set used in the algorithm,
we require the reduction map modulo $\mathbf{P}$ to be injective. Once such a prime is
chosen, each residue appearing in the ciphertext alphabet corresponds to a
unique original sequence value, and hence to a unique plaintext symbol or
separator.

The modular reduction prime $\mathbf{P}$ must therefore be chosen so that the
numerical representatives used in the character tables remain distinguishable
after reduction. Let
\[
\mathcal{C}=\{x_1,x_2,\ldots,x_N\}
\]
be the finite set consisting of all Fibonacci, Leonardo, Lucas and Jacobsthal
values used in the encryption process before reduction modulo $\mathbf{P}$;
specifically, the Fibonacci table starts from \(F_5\), the Leonardo table
from \(\mathcal{L}_5\), and the Jacobsthal table from \(J_7\), in accordance
with the index ranges of Corollary~\ref{cor:21}. The Lucas values used for
word separators are also included in \(\mathcal{C}\). By Corollary~\ref{cor:21}, the prescribed index ranges for the three principal
character tables are pairwise disjoint before modular reduction. The Lucas
separator values, which depend on \(a_0\), are included in the finite set
\(\mathcal C\) for the chosen key instance and are checked together with the
three principal tables in the collision-free condition. However, distinct
integers may become congruent modulo a prime \(\mathbf{P}\). Hence, we must
exclude primes that create such modular collisions.

Define
\[
\Delta \;:=\; \prod_{1\leq i<j\leq N}(x_i-x_j).
\]
Since the elements of \(\mathcal{C}\) are pairwise distinct, we have
\(\Delta\neq 0\). For any prime $\mathbf{P}$, the reduction map
\[
\rho_{P}:\mathcal{C}\longrightarrow \mathbb{Z}_{P},
\qquad
x\longmapsto u \pmod{\mathbf{P}},
\]
is injective if and only if
\[
x_i \not\equiv x_j \pmod{\mathbf{P}}
\qquad
(1\leq i<j\leq N).
\]
Equivalently, this is the same as requiring
\[
\mathbf{P}\nmid (x_i-x_j)
\qquad
(1\leq i<j\leq N),
\]
or, equivalently,
\[
\mathbf{P}\nmid \Delta.
\]
Thus, the condition \(\mathbf{P}\nmid \Delta\) guarantees that no two distinct elements
of \(\mathcal{C}\) collapse to the same residue class modulo \(\mathbf{P}\).
\begin{definition}
A prime \(P\) is said to be \emph{collision-free} for the set
\(\mathcal{C}\) if
\[
\Delta \not\equiv 0 \pmod{\mathbf{P}},
\]
that is, if $\mathbf{P}$ does not divide the discriminant-like product
\(\Delta\). Define the finite set of \emph{forbidden primes} by
\[
\mathcal{F}:=\{\,p \text{ prime}: p\mid \Delta\,\}.
\]
A prime $\mathbf{P}$ is admissible if and only if $\mathbf{P}\notin\mathcal{F}$.
\end{definition}

Since \(\Delta\) is a nonzero integer, it has only finitely many prime
divisors. Therefore, the forbidden set \(\mathcal{F}\) is finite. By
Euclid's theorem, there exist infinitely many primes, and consequently
infinitely many admissible collision-free primes.

For such an admissible prime $\mathbf{P}$, the inverse character map is well-defined.
Indeed, since \(\rho_P\) is injective on \(\mathcal{C}\), the inverse map
\[
\rho_P^{-1}:\rho_P(\mathcal{C})\longrightarrow \mathcal{C}
\]
is uniquely determined by
\[
\rho_P^{-1}(u)=x,
\]
where \(x\in\mathcal{C}\) is the unique element satisfying
\[
x \equiv u \pmod{\mathbf{P}}.
\]
This uniqueness is essential in the decoding step: Each modular ciphertext
symbol belonging to the reduced character alphabet can be traced back to one
and only one original sequence value.

In the computations and worked examples of this paper, we select
\(\mathbf{P=10159}\). It is one of the admissible
collision-free primes for the specific alphabet set \(\mathcal{C}\) used in
our implementation. Its 14-bit binary representation
\[
10159=(10011110101111)_2
\]
is also consistent with the fixed-length binary encoding used in the
avalanche analysis of Section~\ref{subsec:avalanche}. Admissibility was
verified computationally by checking
\[
\gcd(\Delta,10159)=1,
\]
which confirms that
\[
10159\notin\mathcal{F}.
\]
Therefore, for the chosen alphabet set, reduction modulo \(10159\) preserves
the uniqueness of all sequence representatives.

We emphasize that \(\mathbf{P=10159}\) is not a secret security parameter. Its role is to ensure injectivity of the character
mapping and to keep ciphertext entries bounded for computational efficiency.
The security of the scheme rests on the secrecy of the pre-shared key pair
\((s,p)\), while \(\bf{P}\) serves as a public implementation modulus. Any other
prime \(\bf{P}\notin\mathcal{F}\) could be used without affecting the
well-definedness of the inverse character mapping.

The Fibonacci, Leonardo, and Jacobsthal character tables are fixed
independently of the message and the key. Since these sequences are infinite,
the tables can be extended to any prescribed finite character set by assigning
distinct sequence values from suitable predetermined index ranges. In the proposed algorithm, the Lucas indices are shifted by
\(a_0\), the first component of the key sequence generated from the
pre-shared parameters \((s,p)\). Thus, if the separator symbol \(*\) appears
for the \(t\)-th time, it is encoded as
\[
L_{z_t}=L_{a_0+t}.
\]
Hence, unlike the three principal character alphabets, the Lucas separator
values vary with the key through \(a_0\), adding further variability to the
encoding of word separators.
For a fixed implementation, one chooses an upper bound \(T_{\max}\) for the
number of word separators supported by the scheme. Then, for each admissible
key instance, the collision-free condition for the public modulus
\(\mathbf{P}\) is verified on the union of the three fixed character tables
and the corresponding Lucas separator set
\[
\{L_{a_0+1},L_{a_0+2},\ldots,L_{a_0+T_{\max}}\}.
\]
\subsubsection{FLLJ-POLY Encoding Algorithm}
\mbox{}\\
{Step 1.} Determine the message text to be encoded.

\noindent{Step 2.}
Convert the message into a square matrix $M$ of size $3m\times 3m$ by inserting
$*$ between consecutive words and, if necessary, appending additional $*$'s at
the end of the message.

\noindent{Step 3.}
The block parameter is
\[
k=m^2,
\]
which equals the number of $3\times 3$ submatrices. The sender and the
receiver pre-share two independent secret parameters: a seed
$s\in\mathbb{Z}^{+}$ and a key-generation prime $p$. The seed may be chosen
from a large interval, for example
\[
1\leq s<2^{128},
\]
while $p$ is chosen as a secret $b_p$-bit prime satisfying
\[
2^{b_p-1}\leq p<2^{b_p}.
\]
In the recommended parameter setting, one may take $b_s=128$ for the seed and
$b_p=128$ for the key-generation prime. Thus, $s$ is selected and $p$ is selected from the set of $128$-bit primes. The parameters \(s\) and \(p\) are chosen so that
\[
a_0=(L_s k \bmod p)+1\ge 5.
\]
If this condition is not satisfied, a new pair \((s,p)\) is selected.
The per-block private key sequence
\[
a_0,a_1,\ldots,a_{k-1}
\]
is generated iteratively by
\begin{align}
a_0 &= \left(\mathcal{L}_s \cdot k\right)(\bmod p) + 1,  \\
a_{i+1} &= \left(\mathcal{L}_s a_i + a_i^2 + i\right)(\bmod p),
\qquad i=0,1,\ldots,k-2.
\end{align}

Equivalently, the seed affects the key sequence through the residue
\[
\lambda_s=\mathcal{L}_s(\bmod p).
\]
Thus, the effective key-generation rule can be written as
\[
a_0=(\lambda_s k)(\bmod p)+1,
\qquad
a_{i+1}=(\lambda_s a_i+a_i^2+i)(\bmod p).
\]

Since the recurrence defining $a_{i+1}$ is quadratic and non-autonomous, the resulting key sequence is not amenable to simple linear reconstruction. Moreover, the public block parameter \(k=m^2\) is determined only by the number of
\(3\times 3\) plaintext blocks and is independent of the secret parameters
\(s\) and \(p\). The values of \(s\) and \(p\) are pre-shared between the sender
and the receiver, in a manner analogous to a symmetric-key setting such as AES (see \cite{DaemenRijmen2002}).
Thus, they are not derived from \(k\) and are not transmitted together with the
ciphertext.

\medskip\noindent{Step 4.}
Construct the Fibonacci, Leonardo, and Jacobsthal character tables. The Fibonacci table starts from $F_5$, the
Leonardo table from $\mathcal{L}_5$, and the Jacobsthal table from $J_7$.
Lucas numbers are used for word separators.

\medskip

\noindent{Step 5.}
The choice of $\mathbf{P}$ follows the criterion in
Subsection~\ref{subsec:collision-free-prime}. We first replace the symbolic
entries of the message matrix $M$ with their corresponding sequence values
according to the following rules:
\begin{itemize}
    \item \textbf{For letters:} If a character at position $(i,j)$,
    where $i,j \in \{1,\ldots,3m\}$ denote the row and column indices in the
    message matrix $M$, appears for the $v$-th time, compute the selector
    \[
    \sigma=\bigl(v^{2}+(i-1)+(j-1)\bigr)\pmod 3,
    \qquad 1\leq i,j\leq 3m.
    \]
    Replace the character with its corresponding value from the Leonardo table
    if $\sigma=0$, from the Fibonacci table if $\sigma=1$, and from the
    Jacobsthal table if $\sigma=2$.

    \item \textbf{For word separators ($*$'s):} Replace the $t$-th occurrence
    of $*$ with the Lucas number
    \[
    L_{z_t},
    \qquad \text{where } z_t=a_0+t.
    \]
\end{itemize}

After all entries have been replaced by their corresponding sequence values,
we reduce the resulting  matrix modulo $\mathbf{P}$. The matrix
obtained in this way is denoted by $\widehat{M}$.

\medskip
\noindent{Step 6.} Divide the converted matrix, denoted by $\widehat{M}$, into
submatrices of size $3\times 3$ from left to right as follows:%
\begin{equation*}
\widehat{M}=\left(
\begin{array}{cccc}
\widehat{M}_{1} & \widehat{M}_{2} & \cdots & \widehat{M}_{m} \\
\widehat{M}_{m+1} & \widehat{M}_{m+2} & \cdots & \widehat{M}_{2m} \\
\vdots & \vdots & \cdots & \vdots \\
\widehat{M}_{m(m-1)+1} & \widehat{M}_{m(m-1)+2} & \cdots & \widehat{M}%
_{m^{2}}%
\end{array}%
\right) .
\end{equation*}
\noindent{Step 7.}
Encrypt each $3\times 3$ block $\widehat{M}_j$ using its corresponding
private key $a_{j-1}$:
\begin{equation}
K_j \equiv \widehat{M}_j Q^{a_{j-1}} \pmod{\mathbf{P}},
\qquad 1 \le j \le k.
\end{equation}
Since $\det(Q)=-1$, we have $\det(Q^{a_{j-1}})=(-1)^{a_{j-1}}\equiv \pm 1\pmod{\mathbf{P}}$
for every $a_{j-1}$. In particular,
$\gcd(\det(Q^{a_{j-1}}),P)=1$, so $Q^{a_{j-1}}$ is invertible
modulo $\bf{P}$ for all choices of $a_{j-1}$.
Then compute the determinant of each ciphertext block:
\begin{equation}
d_j \equiv \det(K_j) \pmod{\mathbf{P}},
\qquad 1 \le j \le k.
\end{equation}

\medskip\noindent{Step 8.} Construct the code matrix $K_{M}$ using the sub-code
matrices $K_{j}$ ($1\leq j\leq m^{2}$) according to the following: The first column of the matrix $K_{M}$ contains the determinants of the
submatrices, respectively. Then, the other elements of any row contain the
elements of the corresponding submatrix written row by row except the hidden
elements. If the cofactor of the designated hidden entry is congruent to zero modulo $\mathbf{P}$, the entry corresponding to $(k+1) \pmod 9$ in the row-wise ordering is selected instead (where a result of $0$ corresponds to the $9$-th entry), and the receiver applies the same fallback rule. If the cofactor of the initially designated hidden entry is congruent to zero modulo \(\textbf{P}\),
the encoder selects the first entry, in row-wise order, whose cofactor is nonzero modulo
\(\textbf{P}\). The index of this selected hidden entry is transmitted together with the block
determinant. If no such entry exists, the full block is transmitted without hiding an
entry. For example, if the elements $K_{21}^{j}$($1\leq j\leq m^{2}$) are
hidden. Then we have
\begin{equation*}
K_{M}=\left(
\begin{array}{ccccccccc}
\det (K_{1}) & K_{11}^{1} & K_{12}^{1} & K_{13}^{1} & K_{22}^{1} & K_{23}^{1}
& K_{31}^{1} & K_{32}^{1} & K_{33}^{1} \\
\det (K_{2}) & K_{11}^{2} & K_{12}^{2} & K_{13}^{2} & K_{22}^{2} & K_{23}^{2}
& K_{31}^{2} & K_{32}^{2} & K_{33}^{2} \\
\vdots & \vdots & \vdots & \vdots & \vdots & \vdots & \vdots & \vdots &
\vdots \\
\det (K_{m^{2}}) & K_{11}^{m^{2}} & K_{12}^{m^{2}} & K_{13}^{m^{2}} &
K_{22}^{m^{2}} & K_{23}^{m^{2}} & K_{31}^{m^{2}} & K_{32}^{m^{2}} &
K_{33}^{m^{2}}%
\end{array}%
\right) _{m^{2}\times 9}.
\end{equation*}

{Step 9. }Send the code matrix $K_{M}$ to the recipient.

\subsubsection{FLLJ-POLY Decoding Algorithm}
\mbox{}\\
\noindent{Step 1.} Using the pre-shared $(s,p)$ and the extracted block parameter $k$, compute the key sequence $a_0, \dots, a_{k-1}$. Calculate the modular inverse matrices $(Q^{a_{j-1}})^{-1} \pmod{\bf{P}}$ using number theory.

\noindent{Step 2.} Reconstruct the blocks $K_j$ by calculating the missing hidden elements using the provided determinants $d_j$ and the known row elements.

\noindent{Step 3.} Recover the plaintext submatrices $\widehat{M}_j$ by decrypting each block:
\begin{equation}
\widehat{M}_j \equiv K_j \times (Q^{a_{j-1}})^{-1} \pmod{\bf{P}}.
\end{equation}
Assemble the full matrix $\widehat{M}$ from these submatrices.

\noindent{Step 4.} Convert the numerical values in $\widehat{M}$ back to text using the
Fibonacci, Leonardo, Jacobsthal, and Lucas tables. Since the reduction map
$\rho_P$ is injective on $\mathcal{C}$, the inverse character mapping is
well-defined.

\noindent{Step 5. }Write the message text.
\noindent{Step 6. }End of the algorithm.
\newpage
\begin{figure}[H]
\centering
\resizebox{!}{0.80\textheight}{
\begin{tikzpicture}[
    node distance=0.45cm,
    every node/.style={font=\small},
    box/.style={
        rectangle,
        draw=black,
        thick,
        rounded corners=2pt,
        align=center,
        minimum height=0.75cm,
        inner sep=0.15cm
    },
    sbox/.style={
        rectangle,
        draw=black,
        thick,
        align=center,
        minimum height=0.6cm,
        inner sep=0.1cm
    },
    op/.style={
        rectangle,
        draw=black,
        thick,
        fill=gray!10,
        rounded corners=4pt,
        align=center,
        minimum height=0.8cm,
        inner sep=0.15cm
    },
    arr/.style={->, thick}
]

\node (pt) [box] {Plaintext};

\node[font=\Large\bfseries, above=0.3cm of pt] {Encoding Process};

\node (M) [box, below=of pt]
{$M=(m_{ij})_{3m\times 3m}$};

\node (sel) [op, below=of M] {$\sigma=\bigl(v^2+(i-1)+(j-1)\bigr)\pmod {3}$};

\node (tab2) [sbox, left=1.2cm of sel] {Fibonacci Table};
\node (tab1) [sbox, above=0.1cm of tab2] {Leonardo Table};
\node (tab3) [sbox, below=0.1cm of tab2] {Jacobsthal Table};
\node (luc)  [sbox, right=1.2cm of sel] {Lucas separators};

\node (Mh) [box, below=of sel] {$\widehat{M}=M \pmod{\mathbf{P}}$};

\node (blk) [box, below=of Mh] {$\widehat{M}_1,\ldots,\widehat{M}_k$\\[0.5mm] \scriptsize $3\times 3$ blocks};

\node (enc) [op, below=of blk] {$K_j \equiv \widehat{M}_jQ^{a_{j-1}}\pmod{\mathbf{P}}$};

\node (KM) [box, below=of enc] {$K_M$};

\draw[arr] (pt) -- (M);
\draw[arr] (M) -- (sel);
\draw[arr] (sel) -- (Mh);
\draw[arr] (Mh) -- (blk);
\draw[arr] (blk) -- (enc);
\draw[arr] (enc) -- (KM);

\draw[arr] (tab1.east) -- ([yshift=1.5mm]sel.west);
\draw[arr] (tab2.east) -- (sel.west);
\draw[arr] (tab3.east) -- ([yshift=-1.5mm]sel.west);
\draw[arr] (luc.west) -- (sel.east);

\node (seed) [sbox, right=3.0cm of blk] {$s, p \text{ and } k=m^2$};

\node[font=\Large\bfseries, above=0.3cm of seed] {Key Generation};

\node (a0) [op, below=of seed] {$a_0=(\mathcal{L}_s k)\pmod{p}+1$};
\node (ai) [op, below=of a0] {$a_{i+1} = (\mathcal{L}_s a_i + a_i^2 + i) \pmod{p}$};
\node (keys) [box, below=of ai] {Keys:\\[0.5mm] $a_0,a_1,\ldots,a_{k-1}$};

\draw[arr] (seed) -- (a0);
\draw[arr] (a0) -- (ai);
\draw[arr] (ai) -- (keys);

\draw[arr] (keys.west) |- (enc.east);

\node (rec) [op, below=1.8cm of KM] {Recover hidden\\[0.5mm]entry $d_j=\det(K_j)$};

\node[font=\Large\bfseries, above=0.3cm of rec] {Decoding Process};

\draw[arr, dashed, rounded corners=8pt] (KM.west) -- ++(-2.5, 0) |- (rec.west);

\path (KM.south) -- (rec.north) node[midway, left=2.6cm, font=\small\bfseries, align=right] {Channel\\[0.5mm]Transmission};

\node (Kj) [box, below=of rec] {$K_j$};
\node (inv) [op, below=of Kj] {$(Q^{a_{j-1}})^{-1}\pmod{\mathbf{P}}$};
\node (dec) [op, below=of inv] {$\widehat{M}_j \equiv K_j(Q^{a_{j-1}})^{-1}\pmod{\mathbf{P}}$};
\node (Mhh) [box, below=of dec] {$\widehat{M}$};
\node (invmap) [op, below=of Mhh] {$M=\left(\rho_P|_{\mathcal{C}}\right)^{-1}(\widehat{M})$};
\node (MM) [box, below=of invmap] {$M$};
\node (pt2) [box, below=of MM] {Plaintext};

\draw[arr] (rec) -- (Kj);
\draw[arr] (Kj) -- (inv);
\draw[arr] (inv) -- (dec);
\draw[arr] (dec) -- (Mhh);
\draw[arr] (Mhh) -- (invmap);
\draw[arr] (invmap) -- (MM);
\draw[arr] (MM) -- (pt2);

\draw[arr] (keys.south) |- (inv.east);

\end{tikzpicture}
}
\caption{Schematic diagram of the FLLJ-POLY encryption and decryption procedure.}
\label{fig:flljpoly-schematic-vertical}
\end{figure}

\subsection{An Illustrative Example}
\label{subsec:2b}

\begin{example}
\label{ex:main}
We illustrate all steps of the FLLJ-POLY Alphabetic Encryption Algorithm with $\mathbf{P=10159}$.

{Coding process:} \medskip

{Step 1.} Message text:
$\text{A LEONARDO NUMBER IS AN ODD NUMBER.}$

{Step 2.} Message matrix $M$:
\begin{equation*}
M=\left(
\begin{array}{cccccc}
A & * & L & E & O & N \\
A & R & D & O & * & N \\
U & M & B & E & R & * \\
I & S & * & A & N & * \\
O & D & D & * & N & U \\
M & B & E & R & . & *%
\end{array}%
\right) _{6\times 6}.
\end{equation*}

{Step 3.} The number of $3\times 3$ submatrices is $k=4$. With pre-shared parameters $s=7$ and $p=47$, since $\mathcal{L}_{7}=41$, the initial key is
\begin{equation*}
a_{0} = \left(41\cdot 4\right)\bmod{47}+1 = 23+1 = 24.
\end{equation*}
The subsequent block keys are generated via $a_{i+1} = (41\, a_i + a_i^2 + i)(\bmod 47)$:
\begin{align*}
a_1 &= (41\cdot 24 + 24^2 + 0)(\bmod 47) = (984+576)(\bmod 47) = 1560(\bmod 47)= 9,\\
a_2 &= (41\cdot 9 + 9^2 + 1)(\bmod 47) = (369+81+1)(\bmod 47) = 451(\bmod 47) = 28,\\
a_3 &= (41\cdot 28 + 28^2 + 2)(\bmod 47) = (1148+784+2)(\bmod 47) = 1934(\bmod 47) = 7.
\end{align*}
Thus the four block keys are $(a_0, a_1, a_2, a_3) = (24, 9, 28, 7)$, and the Lucas separator index uses $a_0 = 24$.

{Step 4.} The three character tables (unreduced values shown for reference) are given in Tables~\ref{tab:1}--\ref{tab:2b}.

\begin{table}[H]
\centering
\caption{\small First character table: Fibonacci numbers (from $F_{5}$)}
\label{tab:1}
\resizebox{\textwidth}{!}{
\begin{tabular}{|c|c|c|c|c|c|c|c|c|c|}
\hline
A & B & C & D & E & F & G & H & I & J \\ \hline
5 & 8 & 13 & 21 & 34 & 55 & 89 & 144 & 233 & 377 \\ \hline
K & L & M & N & O & P & Q & R & S & T \\ \hline
610 & 987 & 1597 & 2584 & 4181 & 6765 & 10946 & 17711 & 28657 & 46368 \\ \hline
U & V & W & X & Y & Z & 1 & 2 & . & , \\ \hline
75025 & 121393 & 196418 & 317811 & 514229 & 832040 & 1346269 & 2178309 & 3524578 & 5702887 \\ \hline
\end{tabular}
}
\end{table}

\begin{table}[H]
\centering
\caption{\small Second character table: Leonardo numbers (from $\mathcal{L}_{5}$)}
\label{tab:2}
\resizebox{\textwidth}{!}{
\begin{tabular}{|c|c|c|c|c|c|c|c|c|c|}
\hline
A & B & C & D & E & F & G & H & I & J \\ \hline
15 & 25 & 41 & 67 & 109 & 177 & 287 & 465 & 753 & 1219 \\ \hline
K & L & M & N & O & P & Q & R & S & T \\ \hline
1973 & 3193 & 5167 & 8361 & 13529 & 21891 & 35421 & 57313 & 92735 & 150049 \\ \hline
U & V & W & X & Y & Z & 1 & 2 & . & , \\ \hline
242785 & 392835 & 635621 & 1028457 & 1664079 & 2692537 & 4356617 & 7049155 & 11405773 & 18454929 \\ \hline
\end{tabular}
}
\end{table}

\begin{table}[H]
\centering
\caption{\small Third character table: Jacobsthal numbers (from $J_{7}$)}
\label{tab:2b}
\resizebox{\textwidth}{!}{
\begin{tabular}{|c|c|c|c|c|c|c|c|c|c|}
\hline
A & B & C & D & E & F & G & H & I & J \\ \hline
43 & 85 & 171 & 341 & 683 & 1365 & 2731 & 5461 & 10923 & 21845 \\ \hline
K & L & M & N & O & P & Q & R & S & T \\ \hline
43691 & 87381 & 174763 & 349525 & 699051 & 1398101 & 2796203 & 5592405 & 11184811 & 22369621 \\ \hline
U & V & W & X & Y & Z & 1 & 2 & . & , \\ \hline
44739243 & 89478485 & 178956971 & 357913941 & 715827883 & 1431655765 & 2863311531 & 5726623061 & 11453246123 & 22906492245 \\ \hline
\end{tabular}
}
\end{table}
\noindent{Step 5.}
We apply the selector
\[
\sigma=\bigl(v^{2}+(i-1)+(j-1)\bigr)(\bmod 3),
\qquad 1\leq i,j\leq 3m.
\]
For example:
\begin{itemize}
\item Position $(1,1)$: ``A'' with $n=1$,
\[
\sigma=(1+0+0)(\bmod 3)=1
\Rightarrow \text{Fibonacci: } 5.
\]

\item Position $(2,1)$: ``A'' with $n=2$,
\[
\sigma=(4+1+0)(\bmod 3)=2
\Rightarrow \text{Jacobsthal: } 43.
\]

\item Position $(4,4)$: ``A'' with $n=3$,
\[
\sigma=(9+3+3)(\bmod 3)=0
\Rightarrow \text{Leonardo: } 15.
\]
\end{itemize}

It is worth noting that the selector $\sigma$ does not assign alphabets
in a fixed cyclic order. A linear selector depending only on the occurrence
count would cycle through the three alphabets periodically, producing a
predictable substitution pattern. The combined dependence on $v^2$, $i$,
and $j$ breaks this regularity: as seen above, the three occurrences of
``A'' are mapped to the Fibonacci, Jacobsthal, and Leonardo alphabets in
that order, rather than the fixed cycle $0\to1\to2$.

For *'s, since $a_0=24$, we use $L_{z_i}\bmod P$ with $z_i = 24+t_i$. The auxiliary table (with modular values) is:

\begin{table}[h]
\caption{\small Auxiliary table of Lucas numbers for word separators}
\label{tab:3}
{\footnotesize {
\begin{tabular}{|c|c|c|}
\hline
Order of appearance of $*$ & $L_{z_i}$ (unreduced) & $L_{z_i}\bmod 10159$ \\ \hline
1 & $L_{25}=167761$ & $5217$ \\ \hline
2 & $L_{26}=271443$ & $7309$ \\ \hline
3 & $L_{27}=439204$ & $2367$ \\ \hline
4 & $L_{28}=710647$ & $9676$ \\ \hline
5 & $L_{29}=1149851$ & $1884$ \\ \hline
6 & $L_{30}=1860498$ & $1401$ \\ \hline
7 & $L_{31}=3010349$ & $3285$ \\ \hline
\end{tabular}
}}
\end{table}

After applying the selector rule and reducing all values modulo $\mathbf{P=10159}$, we obtain:
\begin{equation*}
\widehat{M} = {M} (\bmod 10159) = \left(
\begin{array}{cccccc}
5 & 5217 & 3193 & 34 & 8239 & 8361 \\
43 & 6518 & 21 & 8239 & 7309 & 2584 \\
9128 & 1597 & 85 & 109 & 7552 & 2367 \\
233 & 9911 & 9676 & 15 & 2584 & 1884 \\
4181 & 67 & 67 & 1401 & 8361 & 3912 \\
5167 & 8 & 34 & 4955 & 9564 & 3285
\end{array}
\right).
\end{equation*}
{Step 6.} We divide $\widehat{M}$ into four $3\times 3$ submatrices
\[
\widehat{M}_1,\ \widehat{M}_2,\ \widehat{M}_3,\ \widehat{M}_4,
\]
namely
\[
\widehat{M}_1=
\left(
\begin{array}{ccc}
5 & 5217 & 3193 \\
43 & 6518 & 21 \\
9128 & 1597 & 85
\end{array}
\right),
\qquad
\widehat{M}_2=
\left(
\begin{array}{ccc}
34 & 8239 & 8361 \\
8239 & 7309 & 2584 \\
109 & 7552 & 2367
\end{array}
\right),
\]
\[
\widehat{M}_3=
\left(
\begin{array}{ccc}
233 & 9911 & 9676 \\
4181 & 67 & 67 \\
5167 & 8 & 34
\end{array}
\right),
\qquad
\widehat{M}_4=
\left(
\begin{array}{ccc}
15 & 2584 & 1884 \\
1401 & 8361 & 3912 \\
4955 & 9564 & 3285
\end{array}
\right).
\]

{Step 7.} Using the block keys
\[
(a_0,a_1,a_2,a_3)=(24,9,28,7),
\]
the ciphertext blocks are computed by
\[
K_1 \equiv \widehat{M}_1Q^{24}\pmod{10159},\qquad
K_2 \equiv \widehat{M}_2Q^{9}\pmod{10159},
\]
\[
K_3 \equiv \widehat{M}_3Q^{28}\pmod{10159},\qquad
K_4 \equiv \widehat{M}_4Q^{7}\pmod{10159}.
\]
Thus,
\[
\begin{aligned}
K_1 &\equiv
\begin{pmatrix}
4236 & 4477 & 8164 \\
1512 & 7415 & 593 \\
8721 & 7392 & 4042
\end{pmatrix},
&
K_2 &\equiv
\begin{pmatrix}
2632 & 900 & 8139 \\
7513 & 9052 & 115 \\
9023 & 8441 & 233
\end{pmatrix}
\pmod{10159},\\[2mm]
K_3 &\equiv
\begin{pmatrix}
8733 & 9140 & 8788 \\
2045 & 4576 & 3581 \\
8905 & 6255 & 7684
\end{pmatrix},
&
K_4 &\equiv
\begin{pmatrix}
8866 & 2920 & 240 \\
2836 & 9887 & 3821 \\
8521 & 3365 & 2891
\end{pmatrix}
\pmod{10159}.
\end{aligned}
\]
\medskip{Step 8.} The determinants are
\[
\begin{aligned}
\det(K_1)&\equiv 7735 \pmod{10159},&
\det(K_2)&\equiv 4898 \pmod{10159},\\
\det(K_3)&\equiv 557 \pmod{10159},&
\det(K_4)&\equiv 10058 \pmod{10159}.
\end{aligned}
\]
Since the entries of $K_j$ are determined by the matrix product
$\widehat{M}_j Q^{a_{j-1}} \pmod{\mathbf{P}}$, the cofactor $A$ of
the hidden entry vanishes modulo $\mathbf{P}$ only if the remaining
entries satisfy the special congruence $K_{11}K_{33} \equiv K_{13}K_{31}
\pmod{\mathbf{P}}$. This is a degenerate condition that does not arise
in the worked examples and is not expected to occur in typical
plaintexts. In the event that this congruence holds, the entry
corresponding to $(k+1)\bmod 9$ in the row-wise ordering is selected
as the hidden entry instead. Since the receiver applies the same
fallback rule, the decoding procedure remains unambiguous.

\medskip
{Step 9.} Since $k=4$, we have $k\equiv 4 \pmod 9$. Hence the $4$th entry in the row-wise ordering of each block is hidden; equivalently, the hidden element is the $(2,1)$-entry of each $3\times 3$ block.

{Step 10.} Therefore, the code matrix $K_M$ is obtained by placing in each row first the determinant of the corresponding block, followed by the remaining eight visible entries written row by row. Thus,
\[
K_M=
\left(
\begin{array}{ccccccccc}
7735  & 4236 & 4477 & 8164 & 7415 & 593  & 8721 & 7392 & 4042 \\
4898  & 2632 & 900  & 8139 & 9052 & 115  & 9023 & 8441 & 233 \\
557   & 8733 & 9140 & 8788 & 4576 & 3581 & 8905 & 6255 & 7684 \\
10058 & 8866 & 2920 & 240  & 9887 & 3821 & 8521 & 3365 & 2891
\end{array}
\right)\pmod{10159}.
\]
This matrix is sent to the receiver.

\medskip
\textbf{Decoding process:} \medskip

\noindent{Step 1.}
To initiate decryption, the receiver must reconstruct the private key sequence using the pre-shared secret seed $s=7$ (yielding the Leonardo number $\mathcal{L}_7 = 41$) and the prime $p=47$. Given the transmitted block parameter $k=4$, the receiver computes the sequence explicitly:
\begin{align}
a_0 &= \left(41 \cdot 4\right)(\bmod 47) + 1 = 24, \label{eq:rec_a0_dec} \\
a_{i+1} &= \left(41 \cdot a_i + a_i^2 + i\right)(\bmod 47), \quad \text{for } i=0,1,2. \label{eq:rec_ai_dec}
\end{align}
This operation successfully yields the identical sequence $(a_0,a_1,a_2,a_3)=(24,9,28,7)$.

Once the sequence is recovered, the receiver computes the corresponding modular inverse matrices$(Q^{a_{j-1}})^{-1} \pmod{\mathbf{P}}$ for each block, ($j=1, \dots, 4$). With $\mathbf{P=10159}$, the inverse matrices are explicitly computed as:
\[
(Q^{24})^{-1}(\bmod 10159)=
\left(
\begin{array}{ccc}
786 & 2606 & 8338 \\
1821 & 5733 & 6248 \\
7553 & 1821 & 5733
\end{array}
\right),
\]
\[
(Q^{9})^{-1}(\bmod 10159)=
\left(
\begin{array}{ccc}
10150 & 12 & 10137 \\
22 & 10126 & 56 \\
10147 & 22 & 10126
\end{array}
\right),
\]
\[
(Q^{28})^{-1}(\bmod 10159)=
\left(
\begin{array}{ccc}
3911 & 514 & 3396 \\
6763 & 2883 & 3881 \\
9645 & 6763 & 2883
\end{array}
\right),
\]
\[
(Q^{7})^{-1}(\bmod 10159)=
\left(
\begin{array}{ccc}
10155 & 4 & 10150 \\
9 & 10147 & 22 \\
10155 & 9 & 10147
\end{array}
\right).
\]
{Step 2.} Since the hidden entry in each block is the $(2,1)$-entry, the missing values are recovered from the determinant conditions
\[
\begin{aligned}
\det(K_1)&\equiv 7735 \pmod{10159},&
\det(K_2)&\equiv 4898 \pmod{10159},\\
\det(K_3)&\equiv 557 \pmod{10159},&
\det(K_4)&\equiv 10058 \pmod{10159}.
\end{aligned}
\]
Thus, the hidden entries are determined as
\[
(K_1)_{21}=1512,\qquad
(K_2)_{21}=7513,\qquad
(K_3)_{21}=2045,\qquad
(K_4)_{21}=2836.
\]
For example, to recover $(K_1)_{21}$, we solve
\[
\det\!\left(
\begin{array}{ccc}
4236 & 4477 & 8164 \\
x & 7415 & 593 \\
8721 & 7392 & 4042
\end{array}
\right)
\equiv 7735 \pmod{10159}.
\]
A direct expansion gives
\[
973x+9614 \equiv 7735 \pmod{10159},
\]
and hence
\[
x \equiv 1512 \pmod{10159}.
\]
Therefore $(K_1)_{21}=1512$. Proceeding similarly for the remaining blocks yields the hidden entries listed above.

Hence the four ciphertext blocks are reconstructed as
\[
\begin{aligned}
K_1 &\equiv
\begin{pmatrix}
4236 & 4477 & 8164 \\
1512 & 7415 & 593 \\
8721 & 7392 & 4042
\end{pmatrix},
&
K_2 &\equiv
\begin{pmatrix}
2632 & 900 & 8139 \\
7513 & 9052 & 115 \\
9023 & 8441 & 233
\end{pmatrix},
\\[1.2em]
K_3 &\equiv
\begin{pmatrix}
8733 & 9140 & 8788 \\
2045 & 4576 & 3581 \\
8905 & 6255 & 7684
\end{pmatrix},
&
K_4 &\equiv
\begin{pmatrix}
8866 & 2920 & 240 \\
2836 & 9887 & 3821 \\
8521 & 3365 & 2891
\end{pmatrix}
\pmod{10159}.
\end{aligned}
\]

{Step 3.} The plaintext blocks are recovered by
\[
\widehat{M}_1 \equiv K_1 (Q^{24})^{-1}\pmod{10159},
\qquad
\widehat{M}_2 \equiv K_2 (Q^{9})^{-1}\pmod{10159},
\]
\[
\widehat{M}_3 \equiv K_3 (Q^{28})^{-1}\pmod{10159},
\qquad
\widehat{M}_4 \equiv K_4 (Q^{7})^{-1}\pmod{10159}.
\]
This yields
\[
\begin{aligned}
\widehat{M}_1 &\equiv
\begin{pmatrix}
5 & 5217 & 3193 \\
43 & 6518 & 21 \\
9128 & 1597 & 85
\end{pmatrix},
&
\widehat{M}_2 &\equiv
\begin{pmatrix}
34 & 8239 & 8361 \\
8239 & 7309 & 2584 \\
109 & 7552 & 2367
\end{pmatrix},
\\[1.2em]
\widehat{M}_3 &\equiv
\begin{pmatrix}
233 & 9911 & 9676 \\
4181 & 67 & 67 \\
5167 & 8 & 34
\end{pmatrix},
&
\widehat{M}_4 &\equiv
\begin{pmatrix}
15 & 2584 & 1884 \\
1401 & 8361 & 3912 \\
4955 & 9564 & 3285
\end{pmatrix}
.
\end{aligned}
\]

{Step 4.} Reassembling these blocks gives
\[
\widehat{M} \equiv
\left(
\begin{array}{cccccc}
5 & 5217 & 3193 & 34 & 8239 & 8361 \\
43 & 6518 & 21 & 8239 & 7309 & 2584 \\
9128 & 1597 & 85 & 109 & 7552 & 2367 \\
233 & 9911 & 9676 & 15 & 2584 & 1884 \\
4181 & 67 & 67 & 1401 & 8361 & 3912 \\
5167 & 8 & 34 & 4955 & 9564 & 3285
\end{array}
\right).
\]

{Step 5.} We construct the message matrix $M$ by converting the entries of $\widehat{M}$ through the three character tables and the Lucas separator values:
\[
M=
\left(
\begin{array}{cccccc}
A & * & L & E & O & N \\
A & R & D & O & * & N \\
U & M & B & E & R & * \\
I & S & * & A & N & * \\
O & D & D & * & N & U \\
M & B & E & R & . & *
\end{array}
\right).
\]

{Step 6.} Therefore the recovered plaintext is
\[
\text{A LEONARDO NUMBER IS AN ODD NUMBER.}
\]
\end{example}

\section{Statistical cryptanalysis: Friedman test, chi-square test, and Shannon entropy}
\label{sec:3}

This section examines the statistical behaviour of the proposed FLLJ-POLY
scheme against classical ciphertext-only frequency analysis. We use three
standard indicators: the Friedman test \cite{Friedman1922}, or Index of
Coincidence (I.C.), the chi-square \((\chi^2)\) goodness-of-fit test~\cite{Cochran1952}, and Shannon
entropy\cite{Shannon2021}.

For standard English text, the I.C.\ is \(\kappa_p\approx 0.0667\)
\cite{Stinson2005, Menezes1996}, while for uniformly random text over a
26-letter alphabet it is
\[
\kappa_r=\frac{1}{26}\approx 0.0385
\]
\begin{remark}\label{rem:alphabet-size}
The value \(\kappa_r=1/26\) applies to a 26-symbol alphabet. Since the proposed
scheme expands the effective ciphertext alphabet, the corresponding random
reference value should be interpreted as \(1/c\), where \(c\) denotes the
number of distinct ciphertext symbols observed in the encrypted text.
\end{remark}

\subsection{Index of Coincidence}
\label{subsec:4a}

A standard English plaintext of approximately 500 characters was processed. When placed into a $24\times 24$ message matrix (with spaces replaced by $*$ and padding $*$'s appended), the total symbol count is $N=576$. The plaintext is:

\begin{quote}
\textit{``THIS STUDY INTRODUCES A NOVEL HOMOPHONIC SUBSTITUTION CIPHER DESIGNED TO RESIST FREQUENCY ANALYSIS ATTACKS. THE ALGORITHM UTILIZES THREE DISTINCT SEQUENCES, LEONARDO, FIBONACCI, AND JACOBSTHAL, TO ENCRYPT CHARACTERS BASED ON POSITION AND OCCURRENCE COUNT. MOREOVER, SPACE CHARACTERS ARE MASKED USING A NONPERIODIC LUCAS SEQUENCE. THIS STRUCTURE AIMS TO FLATTEN THE FREQUENCY DISTRIBUTION, THEREBY LOWERING THE INDEX OF COINCIDENCE. THE PROPOSED METHOD DEMONSTRATES SIGNIFICANT IMPROVEMENTS IN STATISTICAL SECURITY COMPARED TO STANDARD MONOALPHABETIC CIPHERS.''}
\end{quote}

The Friedman test calculates the probability that two randomly selected symbols from the ciphertext are identical:
\begin{equation}
    \text{I.C.} = \frac{\sum_{i=1}^{c} n_i(n_i - 1)}{N(N - 1)}, \label{eq:IC}
\end{equation}
where $n_i$ is the frequency of the $i$-th symbol, $N$ is the total length, and $c$ is the effective alphabet size~\cite{Friedman1922, Stinson2005}.

For the mod~3 scheme, each letter is distributed across three
sequences (Leonardo, Fibonacci, Jacobsthal) via the selector
$\sigma=(v^{2}+(i-1)+(j-1)) (\bmod 3)$. For example, the letter ``E'' (54 occurrences) is distributed
as $(18,18,18)$ across the three alphabets, yielding a total
Friedman contribution of
\[
18\cdot 17 + 18\cdot 17 + 18\cdot 17 = 918.
\]
By contrast, a two-alphabet equal split $(27,27)$ would yield
$27\cdot 26 + 27\cdot 26 = 1404$, so using three alphabets
instead of two reduces this contribution by approximately
$35\%$ for this letter.

\begin{table}[t]%
\centering
\caption{Top-frequency ciphertext symbols for the position-dependent scheme.\label{tab:freq_mod3_short}}%
\begin{tabular*}{425pt}{@{\extracolsep\fill}llcc@{\extracolsep\fill}}
\toprule
\textbf{Ciphertext Symbol} & \textbf{Source} & \textbf{Freq. ($n_i$)} & \textbf{$n_i(n_i-1)$} \\
\midrule
109  & E (Leo) & 22 & 462 \\
683  & E (Jac) & 18 & 306 \\
34   & E (Fib) & 14 & 182 \\
7823 & T (Leo) & 19 & 342 \\
9662 & T (Jac) & 16 & 240 \\
5732 & T (Fib) & 12 & 132 \\
753  & I (Leo) & 16 & 240 \\
764  & I (Jac) & 13 & 156 \\
233  & I (Fib) & 11 & 110 \\
1304 & S (Leo) & 15 & 210 \\
9911 & S (Jac) & 13 & 156 \\
8339 & S (Fib) & 11 & 110 \\
\midrule
\textbf{Others} & \textbf{Lower frequencies ($n_i \le 10$)} & \textbf{--} & \textbf{--} \\
\midrule
\textbf{Word Separators} & \textbf{Lucas numbers ($n_i=1$ each)} & \textbf{88} & \textbf{0} \\
\midrule
\textbf{Total} & & \textbf{576} & \textbf{5054} \\
\bottomrule
\end{tabular*}
\end{table}

Table \ref{tab:freq_mod3_short} lists the most frequent ciphertext symbols contributing to the Friedman statistic. The remaining symbols occur with relatively low frequencies (typically $n_i \leq 10$) and therefore have a limited impact on the overall Index of Coincidence.

In total, 158 distinct ciphertext symbols are observed in the position-dependent mod~3 scheme, including 88 unique Lucas separators, each appearing exactly once and thus contributing zero to the Friedman sum.

This distribution shows that the position-dependent mod~3 rule spreads high-frequency plaintext
symbols across several numerical alphabets instead of concentrating them in a
single ciphertext symbol.

\begin{equation}
    \text{I.C.}_{\bmod 3} = \frac{5054}{576 \times 575} = \frac{5054}{331200} \approx \mathbf{0.01526}, \label{eq:IC-mod3}
\end{equation}

A comparison of these I.C. values with standard reference models is provided in Table \ref{tab:comparison}.

\begin{table}[h]
\centering
\caption{Comparative analysis of I.C. values with uniform reference baselines}
\label{tab:comparison}
\begin{tabular}{l c}
\toprule
\textbf{Text Type / Reference Model} & \textbf{Typical I.C.} \\
\midrule
Standard English & $0.0667$ \\
Monoalphabetic Cipher & $0.0667$ \\
Vigen\`{e}re Cipher (Short Key) & $\approx 0.0450$ \\
Uniform random text over $26$ symbols & $1/26 \approx 0.0385$ \\
Uniform random text over $158$ symbols & $1/158 \approx 0.0063$ \\
\textbf{Proposed position-dependent mod $3$ scheme} & $\mathbf{0.0153}$ \\
\bottomrule
\end{tabular}
\end{table}

\subsection{Chi-square goodness-of-fit test}
\label{subsec:4b}

The chi-square \((\chi^2)\) goodness-of-fit test is used to measure how closely
the observed ciphertext symbol frequencies follow a uniform distribution~\cite{Cochran1952}.
In this context, a lower \(\chi^2\) value indicates that the frequency profile
of the ciphertext is closer to uniform, making simple frequency-based
distinguishability less pronounced. We use Pearson's chi-square statistic:
\begin{equation}
\chi^2 = \sum_{i=1}^{c} \frac{(O_i - E_i)^2}{E_i}.
\label{eq:chi2}
\end{equation}

With \(c=158\) distinct ciphertext symbols and \(N=576\), the expected
frequency under the uniform model is
\[
E_i=\frac{N}{c}=\frac{576}{158}\approx 3.65.
\]
Using the observed ciphertext frequencies \(O_i\), Pearson's statistic gives
\[
\chi^2=\sum_{i=1}^{c}\frac{(O_i-E_i)^2}{E_i}\approx 968.54.
\]
Normalizing by the number of distinct symbols, we obtain
\[
\chi^2/c\approx 6.13.
\]
This normalized statistic serves as a scale-adjusted measure of deviation from the uniform frequency profile. For comparison, a standard monoalphabetic substitution applied to the same
plaintext yields \(\chi^2/c\approx 20.83\). Thus, the proposed position-dependent mod~3 scheme
reduces the normalized chi-square value by approximately
\[
\frac{20.83-6.13}{20.83}\times 100 \approx 70.6\%.
\]
Equivalently, the monoalphabetic value is about \(3.4\) times larger than the
corresponding mod~3 value.
\subsection{Shannon entropy analysis}
\label{subsec:4c}

Shannon entropy measures the uncertainty of a symbol distribution. In the
present context, it is used to quantify how evenly the ciphertext symbols are
distributed: higher entropy indicates a less concentrated frequency profile,
whereas lower entropy indicates dominance by a smaller number of symbols.
Shannon entropy~\cite{Shannon2021} is defined by
\begin{equation}
H=-\sum_{i=1}^{c} p_i\log_2 p_i,
\label{eq:entropy}
\end{equation}
where \(p_i=n_i/N\) is the observed probability of the \(i\)-th ciphertext
symbol.

The entropy satisfies
\[
0\leq H\leq H_{\max},
\qquad
H_{\max}=\log_2 c.
\]
The value \(H=0\) occurs when only one ciphertext symbol appears, while
\(H=H_{\max}\) occurs when all \(c\) symbols are equally likely. Thus, values
closer to \(H_{\max}\) indicate a more balanced ciphertext frequency
distribution.

For the proposed position-dependent mod~3 scheme, \(c=158\) distinct ciphertext symbols are
observed. Hence, the maximum possible entropy is
\[
H_{\max}=\log_2 158\approx 7.304\ \text{bits}.
\] The computed entropy is
\[
H_{\bmod 3}\approx 6.416 \ \text{bits}.
\]
Therefore, the entropy efficiency is
\[
\eta_{\bmod 3}
=
\frac{H_{\bmod 3}}{H_{\max}}\times 100
=
\frac{6.416}{7.304}\times 100
\approx 87.8\%.
\]

This indicates that the ciphertext symbols are relatively well distributed
over the effective alphabet, with reduced concentration around dominant
symbols.
\subsection{Computational performance benchmarks}
\label{subsec:4d}

To evaluate the practical efficiency of the proposed FLLJ-POLY scheme,
computational benchmarks were conducted using a C/C++ implementation compiled
with GCC under the \texttt{-O3} optimization flag. The reported values are
intended as approximate performance estimates, since execution times may vary
depending on the hardware configuration, compiler version, and system-level
conditions. For the unreduced large-integer arithmetic, the GNU Multiple
Precision Arithmetic Library (GMP) was used, whereas the proposed modular
framework with \(\mathbf{P}=10159\) operated entirely within native 64-bit
unsigned integer registers.

Table~\ref{tab:combined_perf} presents the encryption and decryption times for
three different message lengths, comparing the unreduced implementation with
the modular scheme.

\begin{table}[H]
\centering
\caption{Performance comparison in C/C++: unreduced GMP arithmetic versus the modular scheme with \(\mathbf{P}=10159\)}
\label{tab:combined_perf}
\resizebox{\textwidth}{!}{
\begin{tabular}{l c c c c c}
\toprule
\textbf{Message} & \textbf{Matrix} & \textbf{Unreduced} & \textbf{Modular} & \textbf{Modular} & \textbf{Encode} \\
\textbf{Length} & \textbf{Size} & \textbf{Encode} & \textbf{Encode} & \textbf{Decode} & \textbf{Speedup} \\
\midrule
Short \((\sim 36\) chars)  & \(6\times 6\)   & \(0.093\) ms & \(0.012\) ms & \(0.014\) ms & \(\sim 8\times\)  \\
Medium \((\sim 150\) chars) & \(12\times 12\) & \(0.594\) ms & \(0.076\) ms & \(0.084\) ms & \(\sim 7.8\times\) \\
Long \((\sim 500\) chars)  & \(24\times 24\) & \(1.473\) ms & \(0.213\) ms & \(0.234\) ms & \(\sim 7\times\) \\
\bottomrule
\end{tabular}
}
\end{table}

The results in Table~\ref{tab:combined_perf} indicate that the modular
reduction significantly improves computational efficiency. In the unreduced
setting, the matrix entries require large-integer arithmetic, whereas the
modular scheme restricts all computations to \(\mathbb{Z}_{\mathbf{P}}\).
Across the tested message sizes, the modular encoding process achieves an
approximately \(7\)--\(8\times\) speedup. For the \(24\times 24\) message matrix,
the encoding time decreases from \(1.473\) ms to \(0.213\) ms, corresponding to
an approximate \(7\times\) improvement.

\subsection{Avalanche effect analysis}
\label{subsec:avalanche}

The avalanche effect is an empirical measure of the sensitivity of an
encryption algorithm to small changes in the input or in the secret key
\cite{Upadhyay2022, Shannon1949}. Ideally, a one-bit change should affect approximately half
of the ciphertext bits, as expressed by the Strict Avalanche Criterion
\cite{Upadhyay2022,Menezes1996,WebsterTavares1985,Stallings2017,Feistel1973}.

We measure the avalanche ratio by the normalized Hamming distance
\[
\mathrm{AE}
=
\frac{d_H(C,C')}{N}\times 100\%,
\]
where \(d_H(C,C')\) denotes the number of differing bits between the binary
representations of the original ciphertext \(C\) and the perturbed ciphertext
\(C'\), and \(N\) is the total number of ciphertext bits. Since all entries of
the code matrix \(K_M\) are reduced modulo \(\mathbf{P}=10159\), each entry is represented
using
\[
\lceil \log_2 P\rceil=14
\]
bits. In the worked example, \(K_M\) has \(4\times 9=36\) entries, and hence
\[
N=36\cdot 14=504.
\]

\subsubsection{Computation of the Hamming distance}

Given two code matrices \(K_M\) and \(K_M'\) corresponding to two
different key configurations (or two different plaintexts), the
Hamming distance \(d_H(C,C')\) is computed as follows.

Each integer entry \(x \in \{0,1,\ldots,P-1\}\) of the code matrix
is written as a fixed-length 14-bit binary string
\[
x = \sum_{l=0}^{13} b_l \, 2^l, \qquad b_l \in \{0,1\},
\]
padded with leading zeros if necessary so that every entry occupies
exactly 14 bits. For example,
\[
7735 = (01111000110111)_2, \qquad
4236 = (01000010001100)_2.
\]
Concatenating all 36 entries row-wise yields two binary strings
\(C\) and \(C'\), each of length \(N = 504\) bits.

The bitwise exclusive-or \(C \oplus C'\) is computed. Each bit
position where \(C\) and \(C'\) differ produces a~\(1\) in the
result, and each position where they agree produces a~\(0\). Hence
\[
d_H(C,C') = \mathrm{wt}(C \oplus C'),
\]
where \(\mathrm{wt}(\cdot)\) denotes the Hamming weight, i.e., the
number of \(1\)-bits in the binary string. At the level of individual
entries, if \(x_i\) and \(x_i'\) denote the \(i\)-th entries of
\(K_M\) and \(K_M'\) respectively, then
\[
d_H(C,C') = \sum_{i=1}^{36} \mathrm{wt}(x_i \oplus x_i'),
\]
where each XOR is taken over the 14-bit representations.

For an illustrative example, consider the first entry of \(K_M\) under the base configuration
\((s,p)=(7,47)\) and under the perturbed configuration \((s,p)=(8,47)\):
\[
x_1 = 7735 = (01111000110111)_2,
\qquad
x_1' = 3189 = (00110001110101)_2.
\]
Their bitwise XOR is
\[
x_1 \oplus x_1' = (0 1 0 0 1 0 0 1 0 0 0 0 1 0)_2,
\]
which has Hamming weight \(\mathrm{wt} = 4\). This entry alone
contributes \(4\) to the total \(d_H\). Summing this contribution
over all 36 entries yields the overall Hamming distance \(d_H(C,C')\)
reported in Tables~\ref{tab:ae-seed} and~\ref{tab:ae-prime}.

\subsubsection{Key avalanche effect}

We first fix the plaintext and perturb the pre-shared secret key parameters.
Two cases are considered: changing the seed \(s\) while keeping \(p\) fixed,
and changing the prime \(p\) while keeping \(s\) fixed. The base configuration
is
\[
(s,p)=(7,47),
\]
which yields the block-key sequence
\[
(a_0,a_1,a_2,a_3)=(24,9,28,7).
\]

For example, when the seed is changed from \(s=7\) to \(s=8\), the resulting
ciphertext differs from the original one in \(235\) out of \(504\) bits. Thus,
\[
\mathrm{AE}_K
=
\frac{235}{504}\times 100\%
\approx 46.63\%.
\]
The remaining key perturbations are evaluated in the same way.

\begin{table}[H]
\centering
\caption{Key avalanche effect under seed perturbation with \(p=47\) fixed}
\label{tab:ae-seed}
\begin{tabular}{l c c c}
\toprule
\textbf{Perturbation} & \textbf{New key sequence} &
\(\boldsymbol{d_H}\) & \(\boldsymbol{\mathrm{AE}_K}\) \\
\midrule
\(s=7\to s=8\)  & \((34,\,3,\,23,\,4)\)   & \(235\) & \(46.63\%\) \\
\(s=7\to s=9\)  & \((14,\,30,\,35,\,13)\)  & \(230\) & \(45.63\%\) \\
\(s=7\to s=10\) & \((4,\,19,\,12,\,14)\)   & \(258\) & \(51.19\%\) \\
\(s=7\to s=12\) & \((28,\,33,\,32,\,20)\)  & \(234\) & \(46.43\%\) \\
\(s=7\to s=17\) & \((36,\,13,\,37,\,38)\)  & \(258\) & \(51.19\%\) \\
\midrule
\multicolumn{3}{l}{\textbf{Average}} & \(\mathbf{48.21\%}\) \\
\bottomrule
\end{tabular}
\end{table}

\begin{table}[H]
\centering
\caption{Key avalanche effect under prime perturbation with \(s=7\) fixed}
\label{tab:ae-prime}
\begin{tabular}{l c c c}
\toprule
\textbf{Perturbation} & \textbf{New key sequence} &
\(\boldsymbol{d_H}\) & \(\boldsymbol{\mathrm{AE}_K}\) \\
\midrule
\(p=47\to p=53\) & \((6,\,17,\,33,\,6)\)    & \(237\) & \(47.02\%\) \\
\(p=47\to p=59\) & \((47,\,6,\,47,\,8)\)    & \(277\) & \(54.96\%\) \\
\(p=47\to p=61\) & \((43,\,13,\,32,\,20)\)  & \(248\) & \(49.21\%\) \\
\(p=47\to p=67\) & \((31,\,21,\,30,\,55)\)  & \(214\) & \(42.46\%\) \\
\(p=47\to p=71\) & \((23,\,52,\,9,\,26)\)   & \(255\) & \(50.60\%\) \\
\midrule
\multicolumn{3}{l}{\textbf{Average}} & \(\mathbf{48.85\%}\) \\
\bottomrule
\end{tabular}
\end{table}

The average key avalanche ratios are \(48.21\%\) for seed perturbations and
\(48.85\%\) for prime perturbations. Hence, in this example, small changes in
either component of the secret pair \((s,p)\) produce ciphertext changes close
to the ideal \(50\%\) level.

\subsection{Key-space considerations and security}
\label{subsec:keyspace}

This section discusses the effective key space of the proposed scheme and its implications for exhaustive search. In our scheme, the secret key material is the pair
\[
K=(s,p),
\]
where $s$ is a seed and $p$ is a prime modulus, both pre-shared
between the sender and the receiver. The public block number $k$
is not part of the secret key, as it is determined by the message
matrix size.

Since the key generation recurrence uses $\mathcal{L}_s$ only
modulo $p$, the effective contribution of the seed is captured by
the residue
\[
\lambda_s = \mathcal{L}_s \bmod p \in \mathbb{F}_p.
\]
Although $\mathcal{L}_s$ grows exponentially in $s$, the residue
$\lambda_s = \mathcal{L}_s \bmod p$ can be computed efficiently via
the Leonardo $Q$-matrix using fast modular matrix exponentiation.
Specifically, $Q^s \bmod p$ is computed in $O(\log s)$ matrix
multiplications modulo $p$, and $\lambda_s$ is then read off from
the corresponding entry of $Q^s \bmod p$. Consequently,
large values of $s$ and $p$ do not increase the computational
complexity of the key generation step.

\medskip

\noindent For a security-oriented parameter choice, let $s$ be a $b_s$-bit
seed and $p$ a $b_p$-bit prime, where $b_s$ and $b_p$ are chosen
in accordance with current security recommendations \cite{NIST800-57}.
Because the underlying Leonardo sequence follows a second-order linear recurrence, the mapping
$s\mapsto\lambda_s=\mathcal{L}_s\bmod p$ is periodic over $\mathbb{F}_p$ (analogous to the Pisano period) and hence not strictly injective.
The maximum period of such sequences over $\mathbb{F}_p$
is inherently bounded by $p^2-1$, imposing an algebraic limit on the
number of distinct generated residues. Based on the Prime Number Theorem, the effective key space size
satisfies \cite{HardyWright}
\begin{equation}
\log_2|\mathcal{K}_{\mathrm{eff}}|\leq 2b_p-\log_2 b_p.
\label{estimate}
\end{equation}
We emphasize that the illustrative parameters $s=7$ and $p=47$ are chosen solely for computational illustration. In a security-oriented implementation, both $s$ and $p$ should be selected as large integers. For example, choosing a $128$-bit seed and a $128$-bit prime, that is ($b_s=b_p=128$), gives a nominal parameter space of size $2^{256}$. Hence, \eqref{estimate} should be viewed as an upper bound. For $b_p=128$, this gives $|K_{\mathrm{eff}}|\leq 2^{249}$.
A rigorous lower-bound analysis of the effective key space and of
the distribution of the resulting nonlinear block-key sequences
is left for future work.

\section{Strengths and Limitations}
\label{sec:4}

\subsection{Strengths}

\begin{itemize}
    \item
    The proposed scheme represents plaintext characters by using three
    sequence-based alphabets, namely Leonardo, Fibonacci, and Jacobsthal
    alphabets. This distributes repeated plaintext symbols across different
    numerical representatives and reduces concentration around dominant
    ciphertext symbols.

    \item
    The construction combines Leonardo, Fibonacci, Jacobsthal, and Lucas
    sequences within a single polyalphabetic framework. The first three
    sequences are used for character substitution, while Lucas numbers are used
    for word separators.

    \item
    Theorems~\ref{thm:21}, \ref{thm:22}, \ref{thm:LeoJac}, and
    Corollary~\ref{cor:21} provide the theoretical basis for selecting
    non-overlapping portions of the underlying sequences. Together with the
    collision-free choice of \(P\), this supports a well-defined inverse
    character mapping.

    \item
    Each \(3\times 3\) submatrix is encrypted using a block-dependent exponent
    \(a_{j-1}\), generated by the recurrence
    \[
        a_{i+1}
        =
        \bigl(\mathcal{L}_s a_i+a_i^2+i\bigr)\bmod p.
    \]
    The quadratic and index-dependent form of this recurrence introduces
    nonlinear variation into the block-key sequence.

    \item
The seed \(s\) and the prime modulus \(p\) are treated as pre-shared secret
parameters. The public block parameter \(k\) only reflects the number of
\(3\times 3\) plaintext blocks and does not provide sufficient information to derive the
generated key sequence, ensuring that the structural dimensions remain independent of the key space.

    \item
    The selector
    \[
        \sigma=(v^{2}+(i-1)+(j-1))(\bmod 3)
    \]
    depends on both the occurrence count and the matrix position of a character.
    This provides additional variability compared with a purely position-free
    or single-alphabet substitution rule.
\item
Each ciphertext block transmits eight of its nine entries together with the
block determinant. The missing entry is recovered at the receiver from the
determinant condition, provided that the corresponding cofactor is nonzero
modulo \(\mathbf{P}\). Since \(Q^{a_{j-1}}\) mixes the entries of the block through
matrix multiplication, an incorrectly recovered hidden entry---whether due to
a transmission error or deliberate tampering---may corrupt the recovered block
\(\widehat{M}_j\). Such a mismatch may lead to values with no valid
correspondence in the character tables, thereby allowing some decryption
failures to be detected natively.

    \item
Word separators are encoded by dynamically indexed Lucas numbers (specifically, $L_{a_0+t}$) rather than by a fixed repeated symbol. Because the index shift depends on the key-derived parameter $a_0$ and the occurrence count $t$, predictable word-boundary patterns are eliminated in the ciphertext representation. This effectively obscures the underlying word lengths and structural regularities.

    \item
    For the tested sample, the proposed mod~3 scheme achieves an Index of
    Coincidence of \(0.0144\), an entropy efficiency of \(88.2\%\), and a
    normalized chi-square value \(\chi^2/c\approx 5.72\), indicating a more
    balanced ciphertext frequency profile.

    \item
    The modular reduction with $\mathbf{P}$ keeps ciphertext values bounded and
    allows the implementation to use native integer arithmetic. In the tested
    long-message case, this yields a substantial improvement in encoding time
    compared with unreduced large-integer arithmetic.

    \item
    The framework can be modified by varying the block size, the matrix
    transformation, the admissible prime $\mathbf{P}$, the sequence families, or the
    key-generation rule.
\end{itemize}

\subsection{Limitations and Future Improvements}
\label{subsec:limitations}
\begin{itemize}
    \item
    Although the nonlinear key generation mechanism exhibits strong avalanche characteristics, a formal mathematical bounding of the effective key space and a rigorous analysis of the entropy of the generated block-key sequences remain subjects for future theoretical work.

    \item
    While initial statistical benchmarks across long messages confirm favorable frequency distributions, a more comprehensive empirical evaluation across diverse plaintext formats (e.g., highly structured data, non-standard linguistic corpora, or binary files) is required to fully profile the operational boundaries of the scheme \cite{Rukhin2001}.

    \item
    Because the primary focus of this study is to establish the mathematical foundations and sequence separation logic of the polyalphabetic framework, the scheme has not yet been systematically evaluated against advanced cryptographic vulnerabilities, such as algebraic, known-plaintext (KPA), and chosen-plaintext (CPA) attacks.
\end{itemize}
\section{Conclusion and future works}
\label{sec:5}

In this paper, we introduced a new polyalphabetic data encoding scheme built from the Leonardo, Lucas, Fibonacci, and Jacobsthal sequences in combination with the Leonardo $Q$-matrix. The construction is founded on a mathematical framework that brings together three main ingredients: a position-dependent tri-alphabetic substitution rule of the form $\sigma=(v^{2}+(i-1)+(j-1))(\bmod 3)$, a nonlinear sequence-based key generation mechanism producing distinct block keys, and Lucas-number-based separators ensuring uniqueness at word boundaries. A central contribution of the paper is the use of disjointness properties among the underlying integer sequences to justify the construction of distinct numerical alphabets, thereby providing a rigorous number-theoretic basis for the encoding procedure.

Within this framework, the proposed method exhibits favorable
statistical and computational behavior on the tested samples.
Specifically, empirical cryptanalysis demonstrates that the scheme
effectively flattens frequency distributions, achieving an Index of
Coincidence of $0.0153$, an $87.8\%$ Shannon entropy efficiency, and
a normalized chi-square value of $\chi^2/c \approx 6.13$, which
represents a $70.6\%$ reduction compared to standard monoalphabetic
substitution. Furthermore, the algorithm shows sensitivity to parameter
perturbations, yielding an average key avalanche ratio of $48.5\%$.
On the computational side, the modular implementation utilizing the
collision-free prime $\mathbf{P=10159}$ effectively bounds ciphertext
entries and delivers an approximately $7\times$ encoding speedup compared to the non-modular baseline for long messages. Although several cryptanalytic indicators were included
to illustrate the practical behavior of the scheme, the main purpose
of the present work is to develop and justify the mathematical structure
underlying this new sequence-based construction rather than to provide
an exhaustive cryptanalytic treatment; a full analysis against
known-plaintext, chosen-plaintext, and algebraic attacks is left for
future work. Finally, the algorithmic structure of the method was
detailed step by step, and a complete implementation framework was
provided to make the proposed construction reproducible and directly
testable. Possible directions for future research include extensions
to higher-dimensional $Q$-matrices, selector rules involving additional
recurrence sequences, stronger Diophantine and modular criteria for
sequence separation, and broader security analyses for generalized
variants of the scheme.

For reproducibility, all numerical examples, statistical indicators, avalanche-effect computations, and timing benchmarks reported in this paper were obtained from a direct implementation of the FLLJ-POLY encoding and decoding procedures described above. The implementation uses the stated parameters, character tables, modular prime ($\textbf{P}=10159$), and benchmark settings, so that the presented results can be independently regenerated by following the algorithmic steps in Sections~\ref{sec:2} and \ref{sec:3}.


\Authorcontributions{}{All the co-authors have contributed equally in all aspects of the preparation of this submission.}

\Competinginterests{}{The authors declare that they have no known competing financial interests or personal relationships that could have appeared to
	influence the work reported in this paper.}


\Availability{}{The plaintext samples used in the statistical experiments are reported explicitly in Section~\ref{sec:3}. The numerical examples, frequency statistics, avalanche-effect computations, and timing benchmarks were obtained from a direct C/C++ implementation of the FLLJ-POLY encoding and decoding procedures described in Sections~\ref{sec:2} and~\ref{sec:3}. The source code and input data are available from the corresponding author upon reasonable request.}


\begin{thebibliography}{99}
\bibitem{Ahmed} M. H. Ahmed, A. K. Shibeeb and A. H. Mohammed, Solve
polyalphabetic cipher based on intelligent system. In 2021 International
Conference on Communication \& Information Technology (ICICT) (pp. 290-296).
IEEE, 2021.

\bibitem{Aljalali et al}
O. A. Aljalali, A. A. Altirban, S. M. Abu Irzayzah, and T. A. El Bolati,
A novel approach to algorithmic encoding and decoding by Tribonacci matrices,
\emph{Iraqi Journal of Science} 67 (2026), no. 1, 365--378.

\bibitem{Alp Kocer} Y. Alp and E. G. Ko\c{c}er, Some properties of Leonardo
numbers. Konuralp J. Math. 9 (2021), no. 1, 183--189.

\bibitem{Alrammahi} M. A. Alrammahi and H. Kaur, Development of advanced
encryption standard (AES) cryptography algorithm for Wi-Fi security
protocol. International Journal of Advanced Research in Computer Science, 5
(2014), no. 3, 62-67.

\bibitem{Aung et al} T. M. Aung, H. H. Naing and N. N. Hla, A complex
transformation of monoalphabetic cipher to polyalphabetic cipher: (Vigen\`{e}%
re-Affine cipher). International Journal of Machine Learning and Computing,
9 (2019), no. 3, 296-303.

\bibitem{NIST800-57}
E. B. Barker, W. C. Barker, W. E. Burr, W. T. Polk, and M. Smid, Recommendation for Key Management -- Part 1: General,
NIST Special Publication 800-57 Part 1, Revised,
National Institute of Standards and Technology, Gaithersburg, MD, 2007.

\bibitem{Bensella2024} H. Bensella and D. Behloul, Common terms of Leonardo and Jacobsthal numbers. Rend.\ Circ.\ Mat.\ Palermo (2) 73 (2024), no. 1, 259--265.

\bibitem{Catarino Borges 2020} P. Catarino and A. Borges, On Leonardo
numbers. Acta Math. Univ. Comenian. (N.S.) 89 (2020), no. 1, 75--86.

\bibitem{Cochran1952}
W. G. Cochran, The $\chi^2$ test of goodness of fit, The Annals of Mathematical Statistics 23 (1952), no. 3, 315--345.

\bibitem{DaemenRijmen2002}
J. Daemen and V. Rijmen, The Design of Rijndael: AES -- The Advanced Encryption Standard, Information Security and Cryptography, Springer-Verlag, New York, 2002.

\bibitem{ErduvanKeskin2021}
F. Erduvan and R. Keskin, Fibonacci numbers which are products of two Jacobsthal numbers, Tbilisi Math. J. 14 (2021), no. 2, 105--116.

\bibitem{Upadhyay2022}
D. Upadhyay, N. Gaikwad, M. Zaman, S. Sampalli, Investigating the Avalanche Effect of Various Cryptographically Secure Hash Functions and Hash-Based Applications, IEEE Access, 10 (2022), 112472--112486.
\bibitem{Feistel1973}
H. Feistel, Cryptography and computer privacy, Scientific American, 228 (1973), no. 5, 15--23.

\bibitem{Friedman1922} W. F. Friedman, The index of coincidence and its applications in cryptography. Riverbank Publication no. 22. Riverbank Laboratories (1922).
\bibitem{Gaines1956}
H. F. Gaines, Cryptanalysis: A Study of Ciphers and Their Solution,
Dover Publications, Inc., New York, 1956.

\bibitem{Hannan} S. A. Hannan and A. M. A. M. Asif, Analysis of
polyalphabetic transposition cipher techniques used for encryption and decryption. International Journal Of Computer Science and Software Engineering 6 (2017), no. 2, 41--46.

\bibitem{HardyWright}
G. H. Hardy and E. M. Wright, An Introduction to the Theory of Numbers, 6th ed., Oxford University Press, Oxford, 2008.

\bibitem{Koshy} T. Koshy, Fibonacci and Lucas numbers with applications.
Pure and Applied Mathematics (New York). Wiley-Interscience, New York, 2001.

\bibitem{Kuloglu} B. Kulo\u{g}lu and E. \"{O}zkan, Applications of
Jacobsthal and Jacobsthal-Lucas numbers in coding theory. Math. Montisnigri
57 (2023), 54--64.

\bibitem{Madak}
S. Madak, \emph{Encryption algorithm by means of recurrence relations},
Master's thesis, 2022.

\bibitem{Menezes1996} A. J. Menezes, P. C. Van Oorschot, S. A. Vanstone, Handbook of applied cryptography. CRC Press, 1997.

\bibitem{Noman} H. Noman Abed, Z. Mohammed Ali and A. Luay Ahmed, A Robust
Encryption Technique Using Enhanced Vigenre Cipher. International Journal of
Nonlinear Analysis and Applications, 12 (2021), no. 2, 447--454.

\bibitem{Ozcevik et al 2023} S. B. \"{O}z\c{c}evik and A. Dertli, Gaussian
Leonardo polynomials and applications of Leonardo numbers to coding theory.
Journal of Science and Arts, 23 (2023), no. 4, 897-908.

\bibitem{Ozgur et al} N. \"{O}zg\"{u}r, S. U\c{c}ar, N. Ta\c{s}, and \"{O}.
\"{O}. Kaymak, Discrete Math. Algorithms Appl. 17 (2025), no. 4, Paper No. 2450068, 16 pp.

\bibitem{Ozyilmaz} \c{C} \"{O}zy\i lmaz, \textit{Introduction to Cryptology}. M.Sc.
Thesis, Karab\"{u}k University, 2014.

\bibitem{Rawal} B. S. Rawal, P. M. Kumar and R. Singh, No Sum (NS) Sequence
Based Post-Quantum Cryptography. SN Computer Science, 6 (2025), no. 3, 1-11.

\bibitem{Rukhin2001}
A. Rukhin, J. Soto, J. Nechvatal, M. Smid, and E. Barker,
A Statistical Test Suite for Random and Pseudorandom Number Generators for Cryptographic Applications,
NIST Special Publication 800-02, National Institute of Standards and Technology, 2001.

\bibitem{Salah2024}
O. Salah, A. Elsonbaty, M. Abdul, A. Seoud, and M. Anwar, On the Diophantine equations $J_n+J_m=L_k$ and $L_n+L_m=J_k$, Gulf J. Math. 22 (2026), no. 1, 15 pp.

\bibitem{Shannon2021}
C. Shannon,
A mathematical theory of communication,
in \emph{Ideas That Created the Future: Classic Papers of Computer Science},
MIT Press, Cambridge, MA, 2021, pp. 121--134.

\bibitem{Shannon1949}
C. E. Shannon, Communication Theory of Secrecy Systems,
Bell System Tech. J. 28 (1949), no. 4, 656--715.

\bibitem{Sloanebook} N. J. A. Sloane, A Handbook of Integer Sequences, Academic Press, New York-London, 1973.

\bibitem{Sloane} N. J. A. Sloane, The on-line encyclopedia of integers sequences, The OEIS Foundation Inc., http.//oeis.org, 2018.

\bibitem{Soykan} Y. Soykan, Generalized Leonardo numbers. Journal of
Progressive Research in Mathematics, 18 (2021), no. 4, 58-84.

\bibitem{Stakhov 2006} A. P. Stakhov, Fibonacci matrices, a generalization
of the \textquotedblleft Cassini formula\textquotedblright , and a new
coding theory. Chaos Solitons Fractals 30 (2006), no. 1, 56--66.

\bibitem{Stallings2017}
W. Stallings,
Cryptography and Network Security: Principles and Practice,
7th ed., Pearson, pp. 94--96, 2017.

\bibitem{Stinson2005} D. R. Stinson, Cryptography: theory and practice. Chapman and Hall/CRC, 2005.


\bibitem{Tas et al 2018} N. Ta\c{s}, S. U\c{c}ar, N. Y. \"{O}zg\"{u}r and
\"{O}. \"{O}. Kaymak, A new coding/decoding algorithm using Fibonacci
numbers. Discrete Math. Algorithms Appl. 10 (2018), no. 2, 1850028, 8 pp.

\bibitem{Tripathi} B. P. Tripathy and B. K. Patel, Common values of
generalized Fibonacci and Leonardo sequences. J. Integer Seq. 26 (2023), no.
6, Art. 23.6.2, 14 pp.

\bibitem{WebsterTavares1985}
A. F. Webster and S. E. Tavares, On the design of S-boxes, In: Advances in Cryptology---
CRYPTO ’85 Proceedings. CRYPTO 1985. Lecture Notes in Computer Science, vol 218. Springer, Berlin, Heidelberg, 1985.

\end{thebibliography}
\end{document}